\newtheorem{lemma}{Lemma}[section]
\newtheorem{definition}{Definition}[section]
\newtheorem{theorem}{Theorem}[section]
\newtheorem{proposition}{Proposition}[section]
\newtheorem{remark}{Remark}[section]
\newtheorem{assumption}{Assumption}[section]
\numberwithin{equation}{section}
\def\R{{\mathbb{R}}}
\def\Argmin{\mathop{\rm Arg\,min}}
\def\IR{{\bf IR}^{\ell_1}_{\ell_2}}
\title{\sf Doubly iteratively reweighted algorithm for constrained compressed sensing models}
\author{
Shuqin Sun
\thanks{
School of Mathematics Education, China West Normal University, Nanchong, Sichuan, People's Republic of China.
Email: \texttt{sunshuqinsusan@163.com}
}
\and
Ting Kei Pong
\thanks{
Department of Applied Mathematics, the Hong Kong Polytechnic University, Hong Kong, People's Republic of China.
		This author was supported in part by Hong Kong Research Grants Council PolyU153000/20p.
		E-mail: \texttt{tk.pong@polyu.edu.hk}
}
}
\date{June 16, 2022}
\begin{document}

\maketitle

\begin{abstract}
  We propose a new algorithmic framework for constrained compressed sensing models that admit nonconvex sparsity-inducing regularizers including the log-penalty function as objectives, and nonconvex loss functions such as the Cauchy loss function and the Tukey biweight loss function in the constraint. Our framework employs iteratively reweighted $\ell_1$ and $\ell_2$ schemes to construct subproblems that can be efficiently solved by well-developed solvers for basis pursuit denoising such as SPGL1 \cite{Michael2008}. We propose a new termination criterion for the subproblem solvers that allows them to return an \emph{infeasible} solution, with a suitably constructed {\em feasible} point satisfying a descent condition. The feasible point construction step is the key for establishing the well-definedness of our proposed algorithm, and we also prove that any accumulation point of this sequence of feasible points is a stationary point of the constrained compressed sensing model, under suitable assumptions. Finally, we compare numerically our algorithm (with subproblems solved by SPGL1 or the alternating direction method of multipliers) against the SCP$_{\rm ls}$ in \cite{YuLP21} on solving constrained compressed sensing models with the log-penalty function as the objective and the Cauchy loss function in the constraint, for badly-scaled measurement matrices. Our computational results show that our approaches return solutions with better recovery errors, and are always faster.
\end{abstract}

\section{Introduction}\label{sec1}

Compressed sensing \cite{CaT05,FouRau13} is the problem of recovering (approximately) sparse signals from (possibly noisy) measurements that have been compressed for fast transmission.
A classical model for compressed sensing is the basis pursuit denoising, which is minimizing the $\ell_1$ norm subject to a constraint concerning noisy measurements:
\begin{equation}\label{noisyBP}
\begin{array}{rl}
  \min\limits_{x\in \R^n}& \|x\|_1\\
  {\rm s.t.} & \|Ax - b\|\le \underline\sigma,
\end{array}
\end{equation}
where $A\in \R^{m\times n}$ is the measurement matrix, $b\in \R^m$ is the noisy measurement and $\underline\sigma \in [0,\|b\|)$ is a parameter that allows one to incorporate prior knowledge (if any) of the noise level. Model \eqref{noisyBP} is a convex optimization problem and many efficient off-the-shelf solvers have been developed over the past decade for solving it. These include the SPGL1 \cite{Michael2008} that suitably applies the spectral projected gradient method for solving a sequence of $\ell_1$ constrained optimization problems, the YALL1 \cite{YangZhang11} that applies the alternating direction method of multipliers (ADMM) to a suitable reformulation of \eqref{noisyBP}, and a specialized routine in the general purpose first-order method solver package TFOCS \cite{BCG11}, which applies Nesterov's smoothing and acceleration techniques \cite{Nes83,Nesterov2004,Nes2006}, to name but a few. These solvers, taking advantage of convex duality theory and proximal mapping computations, can deal with problems of reasonably large dimension efficiently.

While the convex model \eqref{noisyBP} has been widely used for sparse recovery, it has now become a folklore (see, for example, \cite{RC2007}) that nonconvex sparsity inducing regularizers such as the log-penalty function \cite{CanWakBoyd08,Nikolovalogpenal08} can be used in place of the $\ell_1$ norm in the objective of \eqref{noisyBP} to better induce sparsity in the solution. In this regard, the log-penalty function together with an iteratively reweighted $\ell_1$ (IRL$_1$) technique was introduced in \cite{CanWakBoyd08} for {\em noiseless} compressed sensing, and the IRL$_1$ technique resulted in a \emph{sequence} of convex optimization problems that minimize {\em weighted} $\ell_1$ norms over an affine set. Another generalization for \eqref{noisyBP} is to replace the $\ell_2$ norm in the constraint by other loss functions to reflect different noise models or robustness requirements in the noisy measurement $b$; see, for example, \cite{SAK1985,AZ2012,RE2010,robustreg2014,RE2016}. Concrete examples of alternative loss functions include the Cauchy loss function \cite{RE2010}, the Huber loss function \cite{Huber1964}, and the Tukey biweight loss function \cite{Tukey1977}, again to name but a few. We list in Table~\ref{Table1} as $l(y)$ some commonly used loss functions, where $\delta>0$ in the table and the function ${\cal I}_\delta:\R_+\to \R$ is defined by
\[
{\cal I}_\delta(t):=
\begin{cases}
   1& {\rm if}\ 0\leq t\leq{\delta},\\
   0& {\rm if}\ t>{\delta},
   \end{cases}
   \]
and the Geman-McClure, Welsh and Pseudo-Huber loss functions are written as in \cite{Barron2019} for notational consistency.
Notice that all the functions $l$ in Table~\ref{Table1} satisfy $l(y) = \sum_{i=1}^m \phi(y_i^2)$ with the corresponding $\phi$.

\begin{table}[h]
\begin{center}
\caption{Examples of loss functions}\label{Table1}
\begin{tabular}{|l|l|l|}
\hline
\textbf{Loss functions}&\textbf{Expression of $l(y)$}&\textbf{$\phi(t)$}\\
\hline
\small{Cauchy \cite{RE2010,RE2016}}&\small{$\displaystyle\sum_{i=1}^m\log\left(1+\frac{y_i^2}{\delta^{2}}\right)$}&\small{$\displaystyle\log\left(1+\frac{t}{\delta^{2}}\right)$}\\
\hline
\small{Geman-McClure \cite{GM1985}}&\small{$\displaystyle\sum_{i=1}^m \frac{2y_i^2}{y_i^2 + 4\delta^2}$}&\small{$\displaystyle\frac{2t}{t + 4\delta^2}$}\\
\hline
\small{Welsh \cite{welsh1978}}&\small{$\displaystyle\sum_{i=1}^m\left(1-\exp\left(-\frac{y_i^2}{2\delta^2}\right)\right)$}&\small{$\displaystyle1-\exp\left(-\frac{t}{2\delta^2}\right)$}\\
\hline
\small{Pseudo-Huber \cite{pesoHuber1997}}&\small{$\displaystyle\sum_{i=1}^m\left(\sqrt{1+\frac{y_i^2}{{\delta}^{2}}}-1\right)$}&\small{$\displaystyle \sqrt{1+\frac{t}{{\delta}^{2}}}-1$}\\
\hline
\small{Huber \cite{Huber1964}}
&\small{$\displaystyle\sum_{i=1}^m\left(\frac{y_i^2}{2}{\cal I}_\delta(|y_i|)+\delta\left(|y_i|-\frac{\delta}{2}\right)(1-{\cal I}_\delta(|y_i|))\right)$}
&\small{$\begin{cases}
\frac{t}{2}&{\rm if}\ {\sqrt{t}\leq{\delta}},\\
\delta(\sqrt{t}-\frac{\delta}{2})&{\rm if}\ \sqrt{t}>{\delta}\end{cases}$}\\
\hline
\small{Tukey biweight \cite{Tukey1977}}
&\small{$\displaystyle\sum_{i=1}^m\left(\frac{\delta^{2}}{6}\left[1-\left(1-{\frac{y_i^2}{\delta^2}}\right)^{3}{\cal I}_\delta(|y_i|)\right]\right)$}
&\small{$\begin{cases}
 {\frac{\delta^{2}}{6}}(1-(1-{\frac{t}{\delta^2}})^{3})&{\rm if}\ {\sqrt{t}\leq{\delta}},\\\frac{\delta^{2}}{6}&{\rm if}\ \sqrt{t}>{\delta}\end{cases}$}\\
\hline
\end{tabular}
\end{center}
\end{table}

In this paper, we consider the following optimization problem:
\begin{equation}\label{eq:0.1}
  \begin{array}{rl}
    \min\limits_{x\in{{\mathbb{R}}^{n}}} &\displaystyle\sum_{i=1}^{n}\psi(|x_i|)\\
    {\rm s.t.} & \displaystyle{\sum_{i=1}^{m}\phi((b_i-a^T_ix)^2)\leq{\sigma}},\\
  \end{array}
\end{equation}
where the functions $\psi$, $\phi$, the matrix $A\in \R^{m\times n}$ (with the $i$th row being $a_i^T$), the vector $b\in \R^m$, and the parameter $\sigma > 0$ satisfy the following assumption:
\begin{assumption}\label{ass:1.1}
\begin{enumerate}[{\rm (i)}]
  \item The function $\psi:\mathbb{R}_{+}\rightarrow{\mathbb{R}_{+}}$ is continuous and strictly concave with $\psi(0)=0$ and $\lim_{t\rightarrow{\infty}}\psi(t)=\infty$. It is differentiable on $(0,\infty)$ with $\psi'(t)>0$ for all $t>0$, and ${\lim_{t\downarrow{0}}\psi'}(t)$ exists and belongs to $(0,\infty)$.
  \item The function $\phi:\mathbb{R}_{+}\rightarrow{\mathbb{R}_{+}}$ is continuous and concave with $\phi(0)=0$. It is differentiable on $(0,\infty)$ with $\phi'(t)\geq{0}$ for all $t> 0$ and ${\lim_{t\downarrow{0}}\phi'}(t)$ exists and belongs to $(0,\infty)$.
  \item The matrix $A$ has full row rank and $\sigma \in (0,\sum_{i=1}^m\phi(b_i^2))$.
\end{enumerate}
\end{assumption}
\noindent Note that Assumption~\ref{ass:1.1}(iii) implies that $0$ does not belong to the feasible set of \eqref{eq:0.1}. Moreover, Assumptions~\ref{ass:1.1}(ii) and (iii) imply that $A^\dagger b$ belongs to the feasible set of \eqref{eq:0.1}; hence, the feasible set of \eqref{eq:0.1} is nonempty. Finally, this assumption also implies that the right-hand derivative functions $\phi'_+:\R_+\to \R$ and $\psi'_+:\R_+\to \R$ are continuous functions, and that $x\mapsto \sum_{i=1}^{m}\phi((b_i-a^T_ix)^2)$ is continuously differentiable. One can check that Assumption~\ref{ass:1.1} is general enough to include the choice of $\psi(t) = \log(1 + t/\epsilon)$ for some $\epsilon > 0$ (i.e., the log penalty function proposed in \cite{CanWakBoyd08}) and all the $\phi$'s listed in Table~\ref{Table1}.

Observe that, under Assumption~1.1, the constraint function in \eqref{eq:0.1} is smooth. Moreover, letting $\iota_\psi := \lim_{t\downarrow 0}\psi'(t)$, one can observe from Assumption~\ref{ass:1.1} that the function $t \mapsto \iota_\psi |t| - \psi(|t|)$ is convex. Hence, the objective of \eqref{eq:0.1} admits the following difference-of-convex decomposition:
\[
\sum_{i=1}^{n}\psi(|x_i|) = \iota_\psi \sum_{i=1}^{n}|x_i| - \left(\sum_{i=1}^{n}[\iota_\psi |x_i| - \psi(|x_i|)]\right).
\]
As a consequence, one may suitably adapt difference-of-convex based algorithms such as SCP$_{\rm ls}$ \cite{YuLP21} and other variants described in \cite{LeThiTao18} and references therein to solve \eqref{eq:0.1} under Assumption~\ref{ass:1.1}.
However, compared with those various well-established solvers for \eqref{noisyBP} that can take advantage of convex or gauge duality theory as well as Nesterov's smoothing and acceleration techniques for efficiency, techniques for accelerating the aforementioned algorithms for \eqref{eq:0.1} are relatively limited. In view of this, it is tempting to ask the following question:

\begin{center}
\fbox{\em Can we build an algorithmic framework for \eqref{eq:0.1} that leverages solvers for \eqref{noisyBP}?}
\end{center}

Our point of departure here is the IRL$_1$ technique, which was used in \cite{CanWakBoyd08} for solving a variant of \eqref{eq:0.1} in which the constraint in \eqref{eq:0.1} is replaced by an affine one. The basic version of our proposed framework can be described as follows: in each iteration, we replace $\psi$ and $\phi$ in \eqref{eq:0.1} by their affine majorants at the current iterate. This results in a subproblem of the form
\begin{equation}\label{subproblem00}
\begin{array}{rl}
  \min\limits_{x\in \R^n}& \displaystyle\sum_{i=1}^n\psi_+'(|x_i^k|)|x_i|\\
  {\rm s.t.} & \displaystyle\sum_{i=1}^m\{\phi((b_i-a_i^Tx^k)^2)+ \phi_+'((b_i-a_i^Tx^k)^2)[(b_i-a_i^Tx)^2 - (b_i-a_i^Tx^k)^2]\}\le \sigma;
\end{array}
\end{equation}
the next iterate is then generated as a minimizer of \eqref{subproblem00}. This framework can be viewed as a generalization of the iteratively reweighted schemes in \cite{CanWakBoyd08,CharYin08} where we apply the IRL$_1$ technique to the objective of \eqref{eq:0.1} and the iteratively reweighted $\ell_2$ technique in \cite{CharYin08} to the constraint function in \eqref{eq:0.1}.  Since $\phi'_+ \ge 0$ and $\psi'_+ > 0$ thanks to Assumption~\ref{ass:1.1}, as long as $x^k$ is feasible for \eqref{eq:0.1} (so that $\sigma-\sum_{i=1}^m\phi((b_i-a_i^Tx^k)^2)\ge 0$), problems \eqref{subproblem00} can be equivalently transformed into problems of the form \eqref{noisyBP} with suitably defined $A$, $b$ and $\underline \sigma$, and hence can be solved (approximately) by solvers for \eqref{noisyBP}.

While simple and natural, there are issues when it comes to implementing the above framework, and its convergence behavior is also not clear. First, following the above discussion, one has to guarantee feasibility of $x^k$ for \eqref{eq:0.1} so that \eqref{subproblem00} becomes an instance of \eqref{noisyBP}. Although one can show inductively that {\em exact} minimizers of \eqref{subproblem00} will be feasible for \eqref{eq:0.1} as long as $x^0$ is feasible based on the concavity of $\phi$ (see Section~\ref{sec3} for details), guaranteeing feasibility in practice is not a trivial task. This is because \eqref{subproblem00} is only solved inaccurately and approximately in each iteration in practice, and typical solvers such as SPGL1 may return approximate solutions that (slightly) violate the constraint. Second, even if we assume that
\eqref{subproblem00} are solved {\em exactly} for all $k$, the convergence behavior of $\{x^k\}$ is still unclear. Indeed, classical convergence analysis of iteratively reweighted schemes relies heavily on the strong convexity of subproblems or the realization of the algorithm as an instance of (block-)coordinate minimization scheme (see \cite{Lu14} and references therein). It is not clear whether these approaches can be readily adapted to analyze the basic framework above, where the subproblems \eqref{subproblem00} are in general not strongly convex and the constraint sets can vary across iterations.

In this paper, we adjust the aforementioned framework by incorporating an inexact criterion for solving the subproblems \eqref{subproblem00} approximately, and establish the well-definedness and convergence of the sequence generated by the resulting algorithm, under mild assumptions. Note that commonly used inexact criteria in the literature are typically based on $\epsilon$-subdifferential (see, for example, \cite{InexactVilla2013}) or maintaining summable error to the exact proximal mapping (see, for example, \cite{InexactCombettes2005}); adapted to our subproblems \eqref{subproblem00}, these conditions will require {\em exact} projections onto the feasible set. In contrast, our inexact criterion allows the solver of \eqref{subproblem00} to return an infeasible approximate solution $\widetilde x^{k+1}$, but it requires additionally that the objective value at a suitably constructed retraction $x^{k+1}$ of $\widetilde x^{k+1}$ onto the feasible set of \eqref{subproblem00} \emph{does not exceed the objective value at $x^k$ too much}; and we construct the next subproblem based on $x^{k+1}$. We show that every accumulation point of $\{x^k\}$ is a stationary point of \eqref{eq:0.1} under mild assumptions, even though the subproblems \eqref{subproblem00} are not strongly convex in general. We also demonstrate how our proposed inexact criterion can be realized by subproblem solvers such as ADMM and SPGL1. Finally, we compare our proposed framework (equipped with ADMM or SPGL1 as subproblem solvers) and the SCP$_{\rm ls}$ on solving compressed sensing problems modeled as \eqref{eq:0.1} with the log-penalty function as the objective and the Cauchy loss function in the constraint, for badly scaled measurement matrices $A$. In our experiments, our approaches yield solutions with better recovery errors, and are always faster.

The rest of this paper is organized as follows. We present notation and some preliminary materials in Section~\ref{sec2}. In Section~\ref{sec3}, we present our algorithmic framework with the aforementioned inexact criterion, and establish its well-definedness and convergence. We discuss in Section~\ref{sec4} how the inexact criterion in our algorithmic framework can be realized by two popular solvers for \eqref{noisyBP}: an ADMM-based solver and the SPGL1. Numerical results are presented in Section~\ref{sec5}.

\section{Notation and preliminaries}\label{sec2}
Throughout this paper, we let $\mathbb{R}^{n}$ and $\mathbb{R}_{+}^{n}$ denote the Euclidean space of dimension $n$ and its nonnegative orthant, respectively. For an $x\in \R^n$, we let $|x|$ denote the vector whose $i$th entry is $|x_i|$, and let $\|x\|$ denote the norm of $x$; we also let ${\rm Diag}(x)$ denote the $n\times n$ diagonal matrix whose $i$th diagonal entry equals $x_i$. For an $x\in \R^n_+$, we let $\sqrt{x}$ denote the vector whose $i$th entry is $\sqrt{x_i}$.
For two vectors $x,y\in \mathbb{R}^{n}$, we let $x\circ y$ denote their Hadamard (entry-wise) product and write $x\leq{y}$ if $x_{i}\leq{y_i}$ for each $i=1,2,\cdots,n$. Finally, for a symmetric matrix $A$, we let $\lambda_{\min}(A)$ and $\lambda_{\max}(A)$ denote the smallest and largest eigenvalues of $A$, respectively.

We say that an extended-real-valued function $f:\mathbb{R}^{n}\rightarrow(-\infty,+\infty]$ is proper if its domain ${\rm dom}\,f :=\{x\in \R^n:\;f(x)<\infty\}$ is nonempty. A proper function is said to be closed if it is lower semicontinuous. For a proper function $f$, the regular subdifferential and (limiting) subdifferential\cite[Definition 8.3]{variationalanalysis} of $f$ at an $\bar{x}\in {\rm dom}\,f$ are defined, respectively, as
\[
\widehat{\partial}f(\bar{x}):=\left\{v\in{\mathbb{R}^{n}}:\liminf_{x\rightarrow{\bar{x}},x\neq{\bar{x}}}\frac{f(x)-f(\bar{x})-v^T(x-\bar{x})}{\|x-\bar{x}\|}\geq{0}\right\},
\]
and
\[
\partial{f(\bar{x})}:=\left\{v\in{\mathbb{R}^{n}}:\exists{x^{k}}\rightarrow{\bar{x}}~\mbox{and}~v^{k}\in\widehat{\partial}{f(x^{k})}~\mbox{such}~\mbox{that}~f(x^{k})\rightarrow{f(\bar{x})}~\mbox{and}~v^{k}\rightarrow{v}\right\}.
\]
We set $\widehat{\partial}f({x})=\partial{f(x)}=\emptyset$ if $x\notin{\rm{dom}}\, f$ by convention, and write ${\rm{dom}}\, {\partial{f}:=\{x\in \R^n:\;\partial{f(x)}\neq{\emptyset}}\}$.
When $f$ is proper and convex, the limiting subdifferential of $f$ at an $x\in{\rm{dom}}\,f$ reduces to the classical notion of subdifferential in convex analysis, i.e.,
\[
\partial{f}(x)=\left\{\xi\in \R^n:\;\xi^T(y - x)\leq{f(y)-f(x)}\ \ \forall y\in \R^n\right\};
\]
see \cite[Proposition 8.12]{variationalanalysis}.
For a nonempty set $S$, the indicator function $\delta_{S}$ is defined as
\[
\delta_{S}(x):=
\begin{cases}
   0& {\rm if}\ x\in{S},\\
   \infty& {\rm if}\ x\notin{S}.
   \end{cases}
   \]
The normal cone (resp., regular normal cone) of $S$ at an $x\in{S}$ is defined as $N_{S}(x):=\partial{\delta_{S}}(x)$ (resp., $\widehat{N}_{S}(x):=\widehat{\partial}{\delta_{S}}(x)$), and the distance from any $x\in \R^n$ to $S$ is defined as ${\rm dist}(x,S) := \inf_{y\in S}\|x - y\|$.

We now discuss optimality conditions for \eqref{eq:0.1} under Assumption~\ref{ass:1.1}. For notational simplicity, from now on, for \eqref{eq:0.1}, we write
\begin{equation}\label{notation}
\begin{aligned}
  &\Psi(u) := \sum_{i=1}^n\psi(u_i), \ \ \Phi(v) := \sum_{i=1}^m \phi(v_i), \ \ \mathfrak{F}:=\{x\in{\mathbb{R}^n}:\; \Phi((b-Ax)\circ(b-Ax))\leq{\sigma}\},\\
  &\Psi'_+(u) := (\psi'_+(u_1),\ldots,\psi'_+(u_n))\in \R^n, \ \ {\rm and}\ \ \Phi'_+(v) := (\phi'_+(v_1),\ldots,\phi'_+(v_m))\in \R^m;
\end{aligned}
\end{equation}
specifically, $\mathfrak{F}$ denotes the feasible set of \eqref{eq:0.1}. Note that the function $x\mapsto \Phi((b-Ax)\circ(b-Ax))$ is continuously differentiable everywhere under Assumption~\ref{ass:1.1}(ii). We first recall the following standard constraint qualification for $\frak F$, which is a level set of a continuously differentiable function.

\begin{definition}[MFCQ]\label{def:0.3}
Consider \eqref{eq:0.1} under Assumption \ref{ass:1.1} (with notation \eqref{notation}). We say that the Mangasarian-Fromovitz constraint qualifications (MFCQ) holds for \eqref{eq:0.1} if for every $x\in{\mathfrak{F}}$, the following implication holds
\[
\Phi((b-Ax)\circ(b-Ax)) = \sigma \ \ \ \Longrightarrow \ \ \ \sum_{i=1}^{m}\phi'_{+}((b_{i}-a_{i}^{T}x)^2)(b_{i}-a_{i}^{T}x)a_i \neq 0.
\]
\end{definition}

We also consider the following assumption on the choice of $\sigma$ in \eqref{eq:0.1}.
\begin{assumption}\label{ass:0.2}
Consider \eqref{eq:0.1} under Assumption \ref{ass:1.1}. We assume that
\begin{equation*}
\sigma \notin \{k\overline\phi:\; k = 1,\ldots,m\},
\end{equation*}
where $\overline\phi:=\sup_{t\in{\mathbb{R}_{+}}}\phi(t)\in (0,\infty]$.\footnote{Note that $\overline\phi > 0$ because $\lim_{t\downarrow 0}\phi'(t) > 0$ and $\phi(0)= 0$ in view of Assumption~\ref{ass:1.1}(ii).}
\end{assumption}

We show in the next proposition that Assumption~\ref{ass:0.2} is a sufficient condition for MFCQ.
\begin{proposition}[Assumption~\ref{ass:0.2} implies MFCQ]\label{pro:17}
Consider \eqref{eq:0.1} and suppose that Assumptions~\ref{ass:1.1} and \ref{ass:0.2} hold. Then the following statements hold.
\begin{enumerate}[{\rm (i)}]
  \item If $y\in \R^m$ satisfies $\sum_{i=1}^m\phi(y_i^2) = \sigma$, then there exists $i_0$ such that $y_{i_0} \neq 0$ and $\phi'(y_{i_0}^2) > 0$.
  \item The MFCQ holds for \eqref{eq:0.1}.
\end{enumerate}
\end{proposition}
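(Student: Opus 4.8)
The plan is to establish part~(i) first and then deduce the MFCQ in part~(ii) as a short linear-algebra consequence of it together with the full-row-rank hypothesis on $A$. For part~(i), I would argue by contradiction: suppose $y\in\R^m$ satisfies $\sum_{i=1}^m\phi(y_i^2)=\sigma$ yet for \emph{every} index $i$ we have $y_i=0$, or else $y_i\neq 0$ and $\phi'(y_i^2)=0$ (this is the negation, after using $\phi'\ge 0$ on $(0,\infty)$ from Assumption~\ref{ass:1.1}(ii) to turn $\phi'(y_i^2)\le 0$ into $\phi'(y_i^2)=0$). The key structural observation is extracted from concavity: since $\phi$ is concave and differentiable on $(0,\infty)$, its derivative $\phi'$ is nonincreasing there, so $\phi'(y_i^2)=0$ at a point $y_i^2>0$ forces $\phi'\equiv 0$ on $[y_i^2,\infty)$; as $\phi$ is also nondecreasing (again from $\phi'\ge 0$), it is constant on $[y_i^2,\infty)$ and this constant value is exactly $\overline\phi:=\sup_t\phi(t)$, which is therefore finite. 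Consequently, each index with $y_i\neq 0$ contributes precisely $\overline\phi$ to the sum, while each index with $y_i=0$ contributes $\phi(0)=0$.

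Summing these contributions under the contradiction hypothesis gives $\sum_{i=1}^m\phi(y_i^2)=k\overline\phi$ with $k:=\#\{i:\;y_i\neq 0\}$. Since $\sigma>0$ by Assumption~\ref{ass:1.1}(iii) and $\phi(0)=0$, the $y_i$ cannot all vanish, so $k\ge 1$, while trivially $k\le m$. Thus $\sigma=k\overline\phi$ with $k\in\{1,\ldots,m\}$, contradicting Assumption~\ref{ass:0.2}. The boundary case $\overline\phi=+\infty$ is handled in the same breath: there the contradiction hypothesis would force $\overline\phi<\infty$ at any nonzero index, so no nonzero index can occur, contradicting $k\ge 1$. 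This establishes~(i).

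For part~(ii), I would fix any $x\in\mathfrak{F}$ with $\Phi((b-Ax)\circ(b-Ax))=\sigma$ and set $y:=b-Ax$, so that $\sum_{i=1}^m\phi(y_i^2)=\sigma$. Part~(i) then supplies an index $i_0$ with $y_{i_0}\neq 0$ and $\phi'(y_{i_0}^2)>0$; because $y_{i_0}^2>0$ and $\phi$ is differentiable there, $\phi'_+(y_{i_0}^2)=\phi'(y_{i_0}^2)>0$. Writing $c\in\R^m$ with $c_i:=\phi'_+(y_i^2)\,y_i$, the vector in Definition~\ref{def:0.3} is exactly $\sum_{i=1}^m\phi'_+((b_i-a_i^Tx)^2)(b_i-a_i^Tx)a_i=A^Tc$. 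Its $i_0$-th coordinate $c_{i_0}=\phi'_+(y_{i_0}^2)\,y_{i_0}\neq 0$, so $c\neq 0$; and since $A$ has full row rank, $A^T$ has trivial kernel, whence $A^Tc\neq 0$, verifying the MFCQ implication.

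The main obstacle I anticipate lies in the structural step of part~(i): rigorously arguing that a vanishing right-hand (equivalently, ordinary) derivative at a \emph{finite} argument forces $\phi$ to attain its supremum $\overline\phi$ at that argument, together with the correct bookkeeping of the degenerate case $\overline\phi=+\infty$ so that the count $k$ always lands in $\{1,\ldots,m\}$. Once part~(i) is secured, part~(ii) is immediate from the full-row-rank assumption, and the only care needed is the identification $\phi'_+(t)=\phi'(t)$ for $t>0$ so that the conclusion of~(i) plugs directly into the MFCQ vector.
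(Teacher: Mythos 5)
Your proof is correct and takes essentially the same route as the paper's. Part (i) rests on the same two ingredients---concavity forces $\phi$ to equal its supremum $\overline\phi$ at any positive argument where $\phi'$ vanishes, so the constraint value collapses to $k\overline\phi$ with $1\le k\le m$, contradicting Assumption~\ref{ass:0.2}---and part (ii) draws the same conclusion from full row rank, the only cosmetic difference being that the paper exhibits a test direction $d$ with $Ad=(b_{i_0}-a_{i_0}^Tx)e_{i_0}$ whereas you invoke injectivity of $A^T$ on the nonzero coefficient vector $c$.
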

\begin{proof}
{\rm (i)}: Suppose that $\sum_{i=1}^m\phi(y_i^2) = \sigma$. In view of Assumption~\ref{ass:0.2}, we conclude that there must exist $i_0$ such that $0 < \phi(y_{i_0}^2) < \overline\phi$, where $\overline\phi$ was defined in Assumption~\ref{ass:0.2}. In particular, since $\phi(0)=0$, we must have $y_{i_0}\neq 0$. Then $\phi$ is differentiable at $y_{i_0}^2>0$ by Assumption~\ref{ass:1.1}(ii). Since $\phi(y_{i_0}^2) < \overline\phi$ and $\phi$ is concave with nonnegative derivative on $(0,\infty)$, we conclude that $\phi'(y_{i_0}^2) > 0$.

{\rm (ii)}: Fix any $x\in{\mathfrak{F}}$ satisfying $\sum_{i=1}^{m}\phi((b_{i}-a_{i}^{T}x)^{2})=\sigma$. Then we have from (i) that
there exists an $i_0$ such that
\begin{equation}\label{eq:18}
\phi'_{+}({(b_{i_0}-a_{i_0}^{T}x)^2})>0\ \ \ {\rm and}\ \ \ (b_{i_0}-a_{i_0}^{T}x)^2>0.
\end{equation}
Since the matrix $A$ has full row rank, we can find a $\hat{d}\neq{0}$ such that $A\hat{d}=e_{i_0}$, where $e_{i_0}$ is the vector whose $i_0$th entry is one and is zero otherwise.
Let $d:=(b_{i_0}-a_{i_0}^{T}x)\hat{d}$. Then
\[
a_i^Td = (b_{i_0}-a_{i_0}^{T}x)a_i^T\hat{d} = \begin{cases}
  b_{i_0}-a_{i_0}^{T}x & {\rm if}\ i = i_0,\\
  0 & {\rm otherwise}.
\end{cases}
\]
Hence, combining the above display with \eqref{eq:18}, we deduce further that
\begin{align*}
\sum_{i=1}^{m}\phi'_{+}((b_{i}-a_{i}^{T}x)^2)(b_{i}-a_{i}^{T}x)a_{i}^Td=\phi'_{+}((b_{i_0}-a_{i_0}^{T}x)^2)(b_{i_0}-a_{i_0}^{T}x)^{2} > 0;
\end{align*}
in particular, it must hold that $\sum_{i=1}^{m}\phi'_{+}((b_{i}-a_{i}^{T}x)^2)(b_{i}-a_{i}^{T}x)a_{i}\neq 0$. This completes the proof.
\end{proof}

We next recall the following standard notion of stationarity for \eqref{eq:0.1}.
\begin{definition}[Stationary point]\label{def:0.2}
Consider \eqref{eq:0.1} under Assumption \ref{ass:1.1} (with notation \eqref{notation}). An $x\in{\mathbb{R}^{n}}$ is called a stationary point of \eqref{eq:0.1} if there exists $\lambda\in{\mathbb{R}_{+}}$ such that the following conditions are satisfied for $(x,\lambda)$:
\begin{equation}\label{eq:25}
\lambda(\Phi((b-Ax)\circ(b-Ax))-\sigma)=0,
\end{equation}
\begin{equation}\label{eq:26}
\Phi((b-Ax)\circ(b-Ax))\leq{\sigma},
\end{equation}
\begin{equation}\label{eq:27}
0\in\Psi'_{+}(|x|)\circ\partial\|x\|_1-2\lambda{\sum_{i=1}^{m}}\phi'_{+}((b_{i}-a_{i}^{T}x)^{2})(b_{i}-a_{i}^{T}x){a_i}.
\end{equation}
\end{definition}

Finally, we show that, under Assumptions~\ref{ass:1.1} and \ref{ass:0.2}, every local minimizer of \eqref{eq:0.1} is stationary in the sense of Definition~\ref{def:0.2}.
\begin{proposition}
Consider \eqref{eq:0.1} and suppose that Assumptions~\ref{ass:1.1} and \ref{ass:0.2} hold.
If $x^*$ is a local minimizer of \eqref{eq:0.1}, then it is a stationary point of \eqref{eq:0.1}.
\end{proposition}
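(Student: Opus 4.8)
The plan is to read \eqref{eq:0.1} as the minimization of the locally Lipschitz function $f(x):=\Psi(|x|)=\sum_{i=1}^n\psi(|x_i|)$ over $\mathfrak F$, which by Assumption~\ref{ass:1.1}(ii) is the sublevel set $\{x:\,g(x)\le 0\}$ of the continuously differentiable function $g(x):=\Phi((b-Ax)\circ(b-Ax))-\sigma$ with
\[
\nabla g(x) = -2\sum_{i=1}^m\phi'_+((b_i-a_i^Tx)^2)(b_i-a_i^Tx)\,a_i.
\]
(That $f$ is Lipschitz follows from the strict concavity of $\psi$, which forces $0<\psi'<\iota_\psi$ on $(0,\infty)$.) Since $x^*$ minimizes $f+\delta_{\mathfrak F}$ locally, Fermat's rule gives $0\in\partial(f+\delta_{\mathfrak F})(x^*)$. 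As $f$ is Lipschitz near $x^*$, its horizon subdifferential is $\{0\}$, so the qualification condition for the limiting sum rule holds automatically and I obtain $0\in\partial f(x^*)+N_{\mathfrak F}(x^*)$.

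The next step is to identify the two summands. For the normal cone I will invoke the MFCQ proved in Proposition~\ref{pro:17}: whenever $g(x^*)=0$ it guarantees $\nabla g(x^*)\neq 0$, and then the standard description of the normal cone of a smooth sublevel set gives $N_{\mathfrak F}(x^*)=\{\lambda\nabla g(x^*):\,\lambda\ge 0\}$; when $g(x^*)<0$ the point $x^*$ is interior to $\mathfrak F$ and $N_{\mathfrak F}(x^*)=\{0\}$. In either case there exists $\lambda\ge 0$ with $\lambda g(x^*)=0$ such that $0\in\partial f(x^*)+\lambda\nabla g(x^*)$; this $\lambda$ yields the complementarity \eqref{eq:25}, while feasibility \eqref{eq:26} is simply $x^*\in\mathfrak F$.

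It remains to compute $\partial f(x^*)$. I will use the difference-of-convex decomposition recorded after Assumption~\ref{ass:1.1}: writing $f(x)=\iota_\psi\|x\|_1-H(x)$ with $H(x):=\sum_{i=1}^n\big(\iota_\psi|x_i|-\psi(|x_i|)\big)$, the separable function $H$ is convex, and because $\lim_{t\downarrow 0}\psi'(t)=\iota_\psi$ each summand is in fact $C^1$ with derivative $0$ at the origin; hence $H\in C^1$ and $\nabla H(x)$ vanishes in every coordinate where $x_i=0$. The exact sum rule for a smooth perturbation then gives $\partial f(x^*)=\iota_\psi\partial\|x^*\|_1-\nabla H(x^*)$, and a coordinatewise check (using $\partial|x_i|=\{\operatorname{sign}(x_i)\}$ for $x_i\neq 0$ and $\partial|x_i|=[-1,1]$ for $x_i=0$) shows that this set equals $\Psi'_+(|x^*|)\circ\partial\|x^*\|_1$. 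Substituting this identity and the formula for $\nabla g(x^*)$ into $0\in\partial f(x^*)+\lambda\nabla g(x^*)$ reproduces \eqref{eq:27}, which completes the proof.

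The step I expect to demand the most care is the identification $\partial f(x^*)=\Psi'_+(|x^*|)\circ\partial\|x^*\|_1$ at coordinates with $x_i^*=0$: there the right-derivative limit $\iota_\psi=\psi'_+(0)$ must be matched against the $\iota_\psi[-1,1]$ coming from $\partial\|\cdot\|_1$, and one must verify that $H$ is genuinely $C^1$ at the origin so that subtracting it acts \emph{exactly} on the subdifferential rather than as a mere inclusion. The second delicate point is the exact normal-cone formula used in the Lagrange step; this is precisely where Proposition~\ref{pro:17} is indispensable, since without $\nabla g(x^*)\neq 0$ at active points $N_{\mathfrak F}(x^*)$ need not reduce to a single nonnegative ray and the multiplier $\lambda$ might fail to exist.
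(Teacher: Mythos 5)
Your proof is correct, and its skeleton coincides with the paper's: both apply Fermat's rule to $\Psi(|\cdot|)+\delta_{\mathfrak{F}}$, split the resulting subdifferential via the sum rule for a locally Lipschitz function plus a lower semicontinuous indicator (your horizon-subdifferential remark is exactly the qualification condition behind the paper's appeal to \cite[Exercise 10.10]{variationalanalysis}), and then resolve $N_{\mathfrak{F}}(x^*)$ through the MFCQ supplied by Proposition~\ref{pro:17}, treating the active and inactive cases just as \cite[Theorem 6.14]{variationalanalysis} does in the paper. The one genuine difference lies in how the identity $\partial\big(\Psi(|\cdot|)\big)(x^*)=\Psi'_+(|x^*|)\circ\partial\|x^*\|_1$ is justified: the paper simply cites \cite[Lemma~2.3]{YuPong19}, whereas you re-derive it from the difference-of-convex decomposition $\Psi(|x|)=\iota_\psi\|x\|_1-H(x)$, checking that $H$ is convex and $C^1$ with $\nabla H$ vanishing on zero coordinates (this hinges on $\lim_{t\downarrow 0}\psi'(t)=\iota_\psi$ together with the continuity of $\psi'$ on $(0,\infty)$, which indeed follows from differentiability plus concavity), and then applying the exact sum rule for a smooth perturbation. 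Your coordinatewise matching is sound --- at $x_i^*=0$ the interval $\iota_\psi[-1,1]$ is precisely $\psi'_+(0)\,\partial|0|$, and at $x_i^*\neq 0$ the smooth term shifts $\iota_\psi\,\mathrm{sign}(x_i^*)$ down to $\psi'(|x_i^*|)\,\mathrm{sign}(x_i^*)$ --- so the substitution into $0\in\partial f(x^*)+\lambda\nabla g(x^*)$ recovers \eqref{eq:25}, \eqref{eq:26} and \eqref{eq:27} exactly as in the paper. What your route buys is self-containedness (no dependence on an external lemma, only on facts already recorded after Assumption~\ref{ass:1.1}); what the paper's citation buys is brevity. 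Your closing caution about the $C^1$ property of $H$ at the origin is well placed: that is the only point where the exact (rather than one-sided) subdifferential identity could fail, and your verification closes it.
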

\begin{proof}
Since $x^*$ is a local minimizer of \eqref{eq:0.1}, it is feasible and thus satisfies \eqref{eq:26} in place of $x$.
Next, using the notation in \eqref{notation} and \cite[Theorem 10.1]{variationalanalysis}, we have
$0\in{\partial(\Psi(|\cdot|)+\delta_{{\frak F}}(\cdot))(x^*)}.$
Noticing that $x\mapsto \Psi(|x|)$ is locally Lipschitz continuous, we deduce further from \cite[Exercise 10.10]{variationalanalysis} that
\begin{align}
0&\in{\partial\Psi(|\cdot|)(x^*)}+\partial{\delta_{{\frak F}}(x^*)}=\Psi'_{+}(|x^*|)\circ\partial\|x^*\|_1+N_{{\frak F}}(x^*),\label{eq:28}
\end{align}
where the equality follows from \cite[Lemma~2.3]{YuPong19} (Applied with $f = 0$ and $C = \R^n$) and the definition of normal cone.
Now, since the MFCQ holds in view of Proposition~\ref{pro:17}, we can obtain from \cite[Theorem 6.14]{variationalanalysis} that
\begin{align*}
&N_{{\frak F}}(x^*)=\left\{-2\lambda{\sum_{i=1}^{m}\phi'_{+}((b_{i}-a_{i}^{T}x^*)^2)(b_{i}-a_{i}^{T}x^*){a_{i}}}:\;
\lambda\in{N_{-{\mathbb{R}_{+}}}(\Phi((b-Ax^*)\circ(b-Ax^*))-\sigma)}\right\}\\
&=\left\{-2\lambda{\sum_{i=1}^{m}\phi'_{+}((b_{i}-a_{i}^{T}x^*)^2)(b_{i}-a_{i}^{T}x^*){a_{i}}}:\;\lambda\geq{0},
\lambda(\Phi((b-Ax^*)\circ(b-Ax^*))-\sigma)=0\right\},
\end{align*}
where the second equality follows from the definition of normal cone.
The desired conclusion now follows upon combining \eqref{eq:28} with the above display.
\end{proof}

\section{Doubly iteratively reweighted algorithm}\label{sec3}

In this section, we present our algorithmic framework for solving \eqref{eq:0.1}, which involves {\em convex} subproblems that minimize (weighted) $\ell_1$ norms subject to (weighted) least squares constraints: as mentioned in the introduction, this kind of convex optimization problems have been widely studied in the literature and there are many well-developed solvers we can take advantage of.

Our approach for solving \eqref{eq:0.1} is motivated by the huge literature of iteratively reweighted techniques for handling functions $\psi$ and $\phi$ that satisfy Assumption~\ref{ass:1.1}; see, for example, \cite{CharYin08,CanWakBoyd08}. The basic idea is to make use of the following majorization inequalities, which are direct consequences of the concavity assumptions on $\psi$ and $\phi$: for all $s$, $t\in \R_+$, it holds that
\begin{equation}\label{major}
  \psi(s) \le \psi(t) + \psi'_+(t)(s-t)\ \ {\rm and}\ \  \phi(s) \le \phi(t) + \phi'_+(t)(s-t).
\end{equation}
Using the simplifying notation in \eqref{notation}, we now outline our approach for solving \eqref{eq:0.1} under Assumptions~\ref{ass:1.1} and \ref{ass:0.2}.
Suppose we start with an $x^k\in {\frak F}$ in the $k$th iteration. We then construct the following subproblem:
\begin{equation}\label{subproblem}
\begin{array}{rl}
  \min\limits_{x\in \R^n}& \displaystyle\sum_{i=1}^n\psi_+'(|x_i^k|)|x_i|\\
  {\rm s.t.} & \displaystyle\Phi((b-Ax^k)\circ(b-Ax^k)) + \sum_{i=1}^m\phi_+'((b_i-a_i^Tx^k)^2)[(b_i-a_i^Tx)^2 - (b_i-a_i^Tx^k)^2]\le \sigma.
\end{array}
\end{equation}
Notice that the feasible set of \eqref{subproblem} is nonempty (and contains $x^k$) because $x^k\in {\frak F}$. Moreover, since $\psi_+'(t) > 0$ for all $t\in \R_+$ by assumption, we see that the solution set of \eqref{subproblem} is nonempty and we then define $x^{k+1}$ to be any optimal solution of \eqref{subproblem}. Since $x^{k+1}$ is in particular feasible for \eqref{subproblem}, we deduce from \eqref{major} that
\[
\begin{aligned}
&\Phi((b-Ax^{k+1})\circ(b-Ax^{k+1})) \\
&\le \Phi((b-Ax^k)\circ(b-Ax^k)) + \sum_{i=1}^m\phi_+'((b_i-a_i^Tx^k)^2)[(b_i-a_i^Tx^{k+1})^2 - (b_i-a_i^Tx^k)^2]\le \sigma,
\end{aligned}
\]
showing that $x^{k+1}\in {\frak F}$. Inductively, we can obtain a sequence $\{x^k\}$ feasible for \eqref{eq:0.1} with each $x^{k+1}$ being a solution of the convex problem \eqref{subproblem}: as we mentioned before, since problem \eqref{subproblem} is minimizing a weighted $\ell_1$ norm subject to a weighted least squares constraint, there are many readily available solvers for approximately solving it.

While the scheme described above looks simple and natural, it is far from a practical algorithm, and there are issues in terms of both implementation details and theoretical analysis.
\begin{itemize}
  \item The discussion above assumes that $x^{k+1}$ is a global minimizer of \eqref{subproblem} while in practice \eqref{subproblem} can only be {\em solved approximately} by iterative solvers. Note that typical algorithms for solving \eqref{subproblem} approximately (such as the alternating direction method of multipliers and the SPGL1 that we will describe in Section~\ref{sec4}) usually return an {\em infeasible} approximate solution. However, the feasibility of $x^{k+1}$, as noted above, is used in guaranteeing the nonemptiness of the feasible set of the $(k+1)$th subproblem, and hence the well-definedness of the subproblem. It is not immediately clear how the next subproblem can be constructed if $x^{k+1}$ is infeasible.
  \item From the theoretical perspective, even if we assume \eqref{subproblem} are solved {\em exactly} for all $k$, the convergence behavior of $\{x^k\}$ is still unclear. Indeed, classical convergence analysis of iteratively reweighted schemes either makes use of the strong convexity of the subproblem or the identification of the algorithm as a (block-)coordinate minimization scheme applied to a suitable potential function; see, for example, \cite{Lu14} and references therein. The subproblem \eqref{subproblem} is not strongly convex, and it is not obvious whether the approach outlined above is related to coordinate minimization since the constraint set of subproblem \eqref{subproblem} cannot be written as the product of (one-dimensional) intervals and is changing from iteration to iteration. It is not clear how to extend classical convergence analysis to study the above scheme.
\end{itemize}
In the next subsection, we will present our proposed algorithm, which is based on {\em inexactly} solving \eqref{subproblem} in each iteration, with {\em explicitly} specified termination criteria for the subproblem solvers.  We call our algorithm ``doubly iteratively reweighted algorithm with inexact subproblems" ($\IR$ for short). Convergence of the sequence generated will be established under suitable assumptions.

\subsection{Doubly iteratively reweighted algorithm with inexact subproblems}

Our doubly iteratively reweighted algorithm with inexact subproblems ($\IR$) is presented as Algorithm~\ref{algorithm 1} below. In this algorithm, in each iteration, the subproblem \eqref{subproblem2} (which is the same as \eqref{subproblem}) is solved approximately to obtain $(\tilde x^{k+1},\tilde u^{k+1})$ that satisfies \eqref{eq:0077742}, \eqref{eq:0077842} and \eqref{eq:0077831}. The first two conditions \eqref{eq:0077742} and \eqref{eq:0077842} are approximate Karush-Kuhn-Tucker conditions: notice that $\tilde x^{k+1}$ is not necessarily feasible for \eqref{subproblem2}. The ``splitting of variables" in the stopping criterion \eqref{eq:0077742} is inspired by the recent works \cite{YangToh20,YangToh21}, which used a similar strategy for their inexact Bregman proximal gradient algorithm. The third condition \eqref{eq:0077831} requires that the objective value of \eqref{subproblem2} at the feasible point $P_k(\tilde x^{k+1})$ generated by $\tilde x^{k+1}$ does not exceed the objective value at $x^k$ too much. We then update the next iterate as $P_k(\tilde x^{k+1})$.

\begin{algorithm}[h]
\caption{$\IR$: Doubly iteratively reweighted algorithm with inexact subproblems for \eqref{eq:0.1} under Assumptions~\ref{ass:1.1} and \ref{ass:0.2} (See \eqref{notation} for notation)}
\begin{algorithmic}
    \STATE
    {\bf Step 0.} Pick a positive sequence $\{\tau_{k}\}$ with $\tau_k \downarrow 0$ and a summable positive sequence $\{\mu_k\}$. Choose $x^0\in {\frak F}$. Set $k=0$.

    {\bf Step 1.} Compute the following quantities
\begin{align}
&w^{k}=\Psi'_{+}(|x^{k}|),\ \ \ \ \ y^{k}=b-Ax^{k},\ \ \ \ \ v^k = \sqrt{\Phi'_{+}(y^{k}\circ{y^{k}})},\nonumber\\
&A_{k}={\rm Diag}(v^k) A,\ \ \ \ b^{k}=v^k\circ{b},\ \ \ \ \sigma_{k}=\sigma+\|b^k - A_kx^k\|^{2}-\Phi(y^{k}\circ{y^{k}}).\nonumber
\end{align}

    {\bf Step 2.} Pick any $\epsilon_{k}\in (0,\min\{\sigma_{k},\sqrt{\sigma_k},\tau_{k}\}]$ and approximately solve the subproblem
    \begin{equation}\label{subproblem2}
      \begin{array}{rl}
        \displaystyle \min_{x\in \R^n} & \|w^k\circ{x}\|_{1}\\
        {\rm s.t.}& \|{A_{k}x-b^{k}}\|^{2}\leq{\sigma_{k}},
      \end{array}
    \end{equation}
   by finding a pair $(\tilde{x}^{k+1},\tilde{u}^{k+1})$ such that the following three conditions are satisfied:
   \begin{equation}\label{eq:0077742}
   \mathrm{dist}(0,w^{k}\circ{\partial{\|\tilde{x}^{k+1}\|_1}}+A_{k}^{T}N_{{\|\cdot\|^{2}}\leq{\sigma_{k}}}(\tilde{u}^{k+1}))\leq{\epsilon_{k}},
   \end{equation}
   \begin{equation}\label{eq:0077842}
   \|A_{k}\tilde{x}^{k+1}-b^{k}-\tilde{u}^{k+1}\|\leq{\epsilon_{k}},
   \end{equation}
  \begin{equation}\label{eq:0077831}
  \|w^{k}\circ{P_{k}(\tilde{x}^{k+1})}\|_{1}\leq\|{w^{k}\circ{x^{k}}}\|_{1} + \mu_k,
  \end{equation}
  where $P_{k}$ is defined as
  \begin{equation}\label{eq:225164}
     P_{k}(x):=\begin{cases}
           x & {\rm if}\ \|A_kx - b^k\|^2\le \sigma_k,\\
           \left(1-\frac{\sqrt{\sigma_{k}}}{\|A_{k}x-b^{k}\|}\right)A^\dagger b+\frac{\sqrt{\sigma_{k}}}{\|A_{k}x-b^{k}\|}x & {\rm otherwise}.
           \end{cases}
  \end{equation}

  {\bf Step 3.}  Set $x^{k+1}=P_{k}(\tilde{x}^{k+1})$. Update $k\leftarrow{k+1}$ and go to \bf Step 1.

\end{algorithmic}
\label{algorithm 1}
\end{algorithm}

We next argue that $\IR$ is well defined. To this end, it suffices to show that if an $x^k\in {\frak F}$ is given at some iteration $k \ge 0$, then the corresponding subproblem \eqref{subproblem2} has a nonempty feasible set and a pair $(\tilde{x}^{k+1},\tilde{u}^{k+1})$ satisfying \eqref{eq:0077742}, \eqref{eq:0077842} and \eqref{eq:0077831} can be found; moreover, $x^{k+1}\in {\frak F}$. To establish these properties, we first collect some facts concerning $\{\sigma_k\}$ and $P_k$ from $\IR$ in the next lemma.

\begin{lemma}\label{lem:prep}
  Consider \eqref{eq:0.1} under Assumptions \ref{ass:1.1} and \ref{ass:0.2}. Suppose that an $x^k\in {\frak F}$ is generated at the beginning of the $k$th iteration of $\IR$ for some $k\ge 0$. Then the following statements hold:
  \begin{enumerate}[{\rm (i)}]
    \item It holds that $0<\sigma_k\le \sigma$.
    \item For any $x\in \R^n$, it holds that $P_k(x)\in {\frak F}$.
  \end{enumerate}
\end{lemma}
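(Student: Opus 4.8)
The plan is to reduce everything to componentwise statements about $\phi$ evaluated at $t_i := (y^k_i)^2 = (b_i - a_i^T x^k)^2$, and then to combine the concavity-based majorization inequality \eqref{major} with Proposition~\ref{pro:17}(i). Two algebraic identities will be used repeatedly. Since $v^k_i = \sqrt{\phi'_+(t_i)}$, we have $b^k - A_k x^k = v^k\circ(b - Ax^k) = v^k\circ y^k$, so that $\|b^k - A_k x^k\|^2 = \sum_{i=1}^m \phi'_+(t_i)t_i$ and hence
\[
\sigma_k = \sigma + \sum_{i=1}^m\bigl[\phi'_+(t_i)t_i - \phi(t_i)\bigr].
\]
Moreover, since $A$ has full row rank, $AA^\dagger = I_m$, whence $A_k A^\dagger b = {\rm Diag}(v^k)AA^\dagger b = v^k\circ b = b^k$; this identity is the crux of part (ii).

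For part (i), the bound $\sigma_k \le \sigma$ follows at once from $\phi'_+(t)t \le \phi(t)$ for all $t \ge 0$, which is just \eqref{major} applied with $s = 0$ together with $\phi(0) = 0$. For $\sigma_k > 0$ I would split on whether $x^k$ is interior or boundary. If $\Phi(y^k\circ y^k) = \sum_i \phi(t_i) < \sigma$, then since $\phi'_+(t_i)t_i \ge 0$ we already get $\sigma_k \ge \sigma - \sum_i\phi(t_i) > 0$. The delicate case is $\sum_i\phi(t_i) = \sigma$, where the slack vanishes and $\sigma_k = \sum_i\phi'_+(t_i)t_i$; here Proposition~\ref{pro:17}(i) (which is precisely where Assumption~\ref{ass:0.2} is needed) supplies an index $i_0$ with $t_{i_0} > 0$ and $\phi'_+(t_{i_0}) > 0$, so that $\sum_i\phi'_+(t_i)t_i \ge \phi'_+(t_{i_0})t_{i_0} > 0$. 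This boundary case is the only genuine obstacle in the lemma.

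For part (ii) I would argue in two steps. First, $P_k(x)$ satisfies the subproblem constraint $\|A_k P_k(x) - b^k\|^2 \le \sigma_k$: this is immediate when $\|A_k x - b^k\|^2 \le \sigma_k$, and in the remaining case, writing $\alpha := \sqrt{\sigma_k}/\|A_k x - b^k\| \in (0,1)$ and using $A_k A^\dagger b = b^k$, a direct computation gives $A_k P_k(x) - b^k = \alpha(A_k x - b^k)$, so that $\|A_k P_k(x) - b^k\| = \sqrt{\sigma_k}$ exactly. (Here $\sigma_k > 0$ from part (i) guarantees that $\sqrt{\sigma_k}$ is real and the denominator is nonzero.) Second, I would translate this back into membership in $\mathfrak{F}$: expanding $\|A_k P_k(x) - b^k\|^2 = \sum_i\phi'_+(t_i)(b_i - a_i^T P_k(x))^2$ and substituting the formula for $\sigma_k$ above, the inequality $\|A_k P_k(x) - b^k\|^2 \le \sigma_k$ becomes exactly the linearized constraint
\[
\sum_{i=1}^m\phi(t_i) + \sum_{i=1}^m\phi'_+(t_i)\bigl[(b_i - a_i^T P_k(x))^2 - t_i\bigr] \le \sigma.
\]
Applying \eqref{major} summandwise with $s = (b_i - a_i^T P_k(x))^2$ and $t = t_i$ bounds $\phi((b_i - a_i^T P_k(x))^2)$ above by the $i$th summand on the left, so summing yields $\Phi((b - AP_k(x))\circ(b - AP_k(x))) \le \sigma$, i.e.\ $P_k(x) \in \mathfrak{F}$, as desired.
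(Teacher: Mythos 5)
Your proposal is correct and follows essentially the same route as the paper's proof: the same case split with Proposition~\ref{pro:17}(i) handling the boundary case $\Phi(y^k\circ y^k)=\sigma$, the majorization \eqref{major} at $s=0$ giving $\sigma_k\le\sigma$ (the paper phrases this by evaluating at $A^\dagger b$, which is the same thing), and for part (ii) the identity $A_kA^\dagger b=b^k$ to get $\|A_kP_k(x)-b^k\|^2\le\sigma_k$ followed by \eqref{major} to pass back to $\mathfrak{F}$. The only differences are cosmetic (you substitute the formula for $\sigma_k$ before invoking \eqref{major} rather than after, and you make the retraction computation explicit), so there is nothing to flag.
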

\begin{proof}
{\rm (i)}: Since $x^k \in {\frak F}$, we have $\Phi(y^k\circ y^k) = \Phi((b-Ax^k)\circ (b - Ax^k)) \le \sigma$. If $\Phi(y^k\circ y^k) < \sigma$, then we see immediately from the definition of $\sigma_k$ that $\sigma_k > 0$. Otherwise, suppose that $\Phi(y^k\circ y^k) = \sigma$. Then we see from Proposition~\ref{pro:17}(i) that there exists $i_0$ such that
\begin{equation}\label{eq:1}
(b_{i_0}-a_{i_0}^{T}x^k)^2>0\ \ \ {\rm and}\ \ \
\phi'_{+}((b_{i_0}-a_{i_0}^{T}x^k)^2)>0.
\end{equation}
Using the fact that $\phi'_+(t)\ge 0$ for all $t\in \R_+$, we obtain further that
\begin{align*}
&\Phi(y^{k}\circ{y^{k}}) - \|b^k - A_kx^k\|^{2}=\sum_{i=1}^{m}\phi((b_{i}-a_{i}^{T}x^{k})^{2})-{\sum_{i=1}^{m}\phi'_{+}((b_{i}-a_{i}^{T}x^{k})^{2})(b_{i}-a_{i}^{T}x^{k})^{2}}\\
&\leq{\sum_{i=1}^{m}\phi((b_{i}-a_{i}^{T}x^{k})^{2})-{\phi'_{+}((b_{i_0}-a_{i_0}^{T}x^{k})^{2})(b_{i_0}-a_{i_0}^{T}x^{k})^{2}}}<{\sum_{i=1}^{m}\phi((b_{i}-a_{i}^{T}x^{k})^{2})}=\sigma,
\end{align*}
where the strict inequality follows from \eqref{eq:1}. The above display together with the definition of $\sigma_k$ shows that $\sigma_k > 0$.

Next, we show that $\sigma_{k}\leq{\sigma}$. In fact, one has from \eqref{major} that
\[
\sum_{i=1}^{m}\phi((b_{i}-a_{i}^{T}A^\dagger b)^{2})
\leq\sum_{i=1}^{m}\phi((b_{i}-a_{i}^{T}x^{k})^{2})+\sum_{i=1}^{m}\phi'_{+}((b_{i}-a_{i}^{T}x^{k})^{2})[(b_{i}-a_{i}^{T}A^\dagger b)^{2}-(b_{i}-a_{i}^{T}x^{k})^{2}].
\]
Since $AA^\dagger b = b$ and $\phi(0)=0$, we deduce from the above display that
\begin{align*}
\sum_{i=1}^{m}\phi'_{+}((b_{i}-a_{i}^{T}x^{k})^{2})(b_{i}-a_{i}^{T}x^{k})^{2}-\sum_{i=1}^{m}\phi((b_{i}-a_{i}^{T}x^{k})^{2})\le 0.
\end{align*}
Thus,
\[
\sigma_{k}=\sigma+\sum_{i=1}^{m}\phi'_{+}((b_{i}-a_{i}^{T}x^{k})^{2})(b_{i}-a_{i}^{T}x^{k})^{2}-\sum_{i=1}^{m}\phi((b_{i}-a_{i}^{T}x^{k})^{2})\leq{\sigma}.
\]

{\rm (ii)}: Notice that $A_kA^\dagger b = {\rm Diag}(v^k)AA^\dagger b= {\rm Diag}(v^k) b= b^k$. Using this and the definition of $P_k(x)$, we obtain that
\begin{equation}\label{rel1}
\sum_{i=1}^{m}\phi'_{+}((b_{i}-a_{i}^{T}x^{k})^{2})(b_{i}-a_{i}^{T}P_k(x))^{2} = \|A_k P_k(x) - b^k\|^2 \le \sigma_k
\end{equation}
Now, we have from \eqref{major} that
\begin{equation*}
\begin{aligned}
&\sum_{i=1}^{m}\phi((b_{i}-a_{i}^{T}P_k(x))^{2})\\
& \leq\sum_{i=1}^{m}\phi((b_{i}-a_{i}^{T}x^{k})^{2})+\sum_{i=1}^{m}\phi'_{+}((b_{i}-a_{i}^{T}x^{k})^{2})[(b_{i}-a_{i}^{T}P_k(x))^{2}-(b_{i}-a_{i}^{T}x^{k})^{2}]\\
& \overset{\rm (a)}= \sigma - \sigma_k + \sum_{i=1}^{m}\phi'_{+}((b_{i}-a_{i}^{T}x^{k})^{2})(b_{i}-a_{i}^{T}P_k(x))^{2}\le \sigma,
\end{aligned}
\end{equation*}
where (a) follows from the definition of $\sigma_k$, and the last inequality follows from \eqref{rel1}. This completes the proof.
\end{proof}

\begin{remark}[Well-definedness of $\IR$]\label{rem3.1}
  We now discuss the well-definedness of $\IR$. Suppose that an $x^k\in {\frak F}$ is given at some iteration $k \ge 0$. Then we have $\sigma_k > 0$  according to Lemma~\ref{lem:prep}(i). This means that the corresponding subproblem \eqref{subproblem2} has a nonempty feasible set: indeed, $A^\dagger b$ is a Slater point of the feasible set because $A_kA^\dagger b = b^k$. In addition, the tolerance $\epsilon_k>0$ is well defined. Under the Slater condition and the positivity of $w^k$ and $\epsilon_k$, as we will discuss later in Section~\ref{sec4}, there are many algorithms one can apply to solve \eqref{subproblem2} approximately to obtain a pair $(\tilde{x}^{k+1},\tilde{u}^{k+1})$ that satisfies \eqref{eq:0077742}, \eqref{eq:0077842} and \eqref{eq:0077831}. Finally, Lemma~\ref{lem:prep}(ii) shows that $P_k(\tilde x^{k+1})\in {\frak F}$ and hence $x^{k+1}\in {\frak F}$. These together with an induction argument establish the well-definedness of $\IR$.
\end{remark}

\begin{remark}[The role of $P_k$]\label{rem3.2}
  On passing, we would like to point out that the proof of Lemma~\ref{lem:prep}(i) does not rely on whether the conditions \eqref{eq:0077742}, \eqref{eq:0077842} and \eqref{eq:0077831} are satisfied. Thus, as long as an $x^k\in {\frak F}$ is available, we will have $\sigma_k > 0$ so that the corresponding subproblem \eqref{subproblem2} is well defined. Moreover, regardless of how (in)accurately this subproblem is solved, applying $P_k$ to the approximate solution obtained will return a point in ${\frak F}$, which guarantees the well-definedness of the subproblem in the next iteration. This observation is important for practical implementation when black-box solvers, whose termination conditions are preset / not easy to adjust, are invoked for solving the subproblems \eqref{subproblem2}.
\end{remark}

\begin{proposition}\label{prop:221121}
Consider \eqref{eq:0.1} under Assumptions \ref{ass:1.1} and \ref{ass:0.2}. Let $\{x^k\}$, $\{\tilde x^k\}$, $\{\epsilon_k\}$, $\{\mu_k\}$ and $\{\sigma_k\}$ be as in $\IR$. Then the following statements hold.
\begin{enumerate}[{\rm (i)}]
  \item The sequences $\{x^{k}\}$ and $\{\tilde{x}^{k}\}$ are bounded, and it holds that for all $k$,
  \[
  \Psi(|x^{k+1}|) - \Psi(|x^k|)\le \mu_k.
  \]
  Moreover, the sequence $\{\Psi(|x^k|)\}$ is convergent.
  \item There exists $M>0$ such that for all $k$,
  \[
  \|x^{k+1}-\tilde{x}^{k+1}\|\le\frac{\epsilon_{k}}{\sqrt{\sigma_{k}}}\|A^\dagger b-\tilde{x}^{k+1}\|\leq{\sqrt{\epsilon_{k}}\|A^\dagger b-\tilde{x}^{k+1}\|} \le M \sqrt{\epsilon_k}.
  \]
  \item It holds that $\lim_{k\to\infty} \| |\tilde{x}^{k+1}|-|x^{k}|\|=0.$
\end{enumerate}
\end{proposition}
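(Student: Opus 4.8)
The plan is to bound the target quantity by the triangle inequality
\[
\| \,|\tilde x^{k+1}| - |x^k| \,\| \le \| \,|\tilde x^{k+1}| - |x^{k+1}| \,\| + \| \,|x^{k+1}| - |x^k| \,\|,
\]
and to show that each term on the right tends to $0$. The first term is immediate from part~(ii): since the map $x\mapsto |x|$ is $1$-Lipschitz (entrywise, hence in the Euclidean norm), we have $\| \,|\tilde x^{k+1}| - |x^{k+1}| \,\| \le \|\tilde x^{k+1} - x^{k+1}\| \le M\sqrt{\epsilon_k}$, and $\epsilon_k \le \tau_k \downarrow 0$ (by Step~0 and Step~2) forces this to vanish. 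Thus everything reduces to proving $\| \,|x^{k+1}| - |x^k| \,\| \to 0$.

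For this second term I would exploit the strict concavity of $\psi$ together with the weighted-$\ell_1$ descent condition \eqref{eq:0077831}. For each index $i$ and iteration $k$, define the majorization gap
\[
g_i^k := \psi(|x_i^k|) + \psi'_+(|x_i^k|)(|x_i^{k+1}| - |x_i^k|) - \psi(|x_i^{k+1}|),
\]
which is nonnegative by the majorization inequality \eqref{major}. Recalling that $x^{k+1} = P_k(\tilde x^{k+1})$ and $w^k = \Psi'_+(|x^k|)$ with $w_i^k = \psi'_+(|x_i^k|) > 0$, condition \eqref{eq:0077831} reads $\sum_{i=1}^n \psi'_+(|x_i^k|)(|x_i^{k+1}| - |x_i^k|) \le \mu_k$, so summing the definition of $g_i^k$ over $i$ gives
\[
0 \le \sum_{i=1}^n g_i^k \le \Psi(|x^k|) - \Psi(|x^{k+1}|) + \mu_k.
\]
Since $\Psi \ge 0$ and $\{\mu_k\}$ is summable, a telescoping argument bounds the partial sums $\sum_{k=0}^N\sum_i g_i^k$ by $\Psi(|x^0|) + \sum_{k}\mu_k$ uniformly in $N$; hence $\sum_k\sum_i g_i^k < \infty$, and in particular $g_i^k \to 0$ for every $i$.

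It remains to upgrade $g_i^k \to 0$ to $|x_i^{k+1}| - |x_i^k| \to 0$, and this is the step I expect to be the crux. By part~(i) the iterates are bounded, so all entries $|x_i^k|$ lie in some interval $[0,R]$. On $[0,R]^2$ consider $h(t,s) := \psi(t) + \psi'_+(t)(s-t) - \psi(s)$, which is continuous (as $\psi$ and $\psi'_+$ are continuous under Assumption~\ref{ass:1.1}(i)) and nonnegative; strict concavity of $\psi$ makes the supporting-line inequality strict off the diagonal, so $h(t,s) = 0$ if and only if $s = t$ (including at the boundary $t = 0$, where the relevant slope is the right-derivative $\iota_\psi$). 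Consequently, for each $\delta > 0$, the continuous positive function $h$ attains a strictly positive minimum $c_\delta$ on the compact set $\{(t,s) \in [0,R]^2 : |s - t| \ge \delta\}$; since $g_i^k = h(|x_i^k|, |x_i^{k+1}|)$ and $g_i^k \to 0$, we must eventually have $|x_i^{k+1}| - |x_i^k| < \delta$, and letting $\delta \downarrow 0$ yields $|x_i^{k+1}| - |x_i^k| \to 0$. As $n$ is finite, $\| \,|x^{k+1}| - |x^k| \,\| \to 0$, which closes the argument. The main obstacle is precisely this last conversion: passing from the vanishing of the majorization gaps to the vanishing of the increments requires genuine strict concavity (mere concavity would permit $h$ to vanish on a nontrivial segment, destroying the compactness bound), and some care is needed at $t=0$, where one argues with $\iota_\psi = \lim_{t\downarrow 0}\psi'(t)$ rather than an ordinary derivative.
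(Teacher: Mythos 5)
Your proposal proves only part~(iii) of the proposition: parts~(i) and~(ii) are invoked as known facts (boundedness of $\{x^k\}$ and $\{\tilde x^k\}$, and the bound $\|x^{k+1}-\tilde{x}^{k+1}\|\le M\sqrt{\epsilon_k}$) but never established. Within the paper's logical structure this dependence is legitimate, since the paper's own proof of~(iii) also rests on~(i) and~(ii); but those two parts carry real content (the descent-plus-summability argument giving convergence of $\{\Psi(|x^k|)\}$ and boundedness via level-boundedness of $\Psi$, and the $\theta_k$-retraction estimates for $\{\tilde x^k\}$ and $P_k$), so as a proof of the full statement your attempt is incomplete.

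For part~(iii) itself, your argument is correct and genuinely different from the paper's. The paper argues by contradiction through subsequential limits: it extracts convergent subsequences $x^{k_j}\to x^*$ and $\tilde x^{k_j+1}\to\bar x$, uses strict concavity of $\psi$ together with \eqref{eq:0077831} and part~(ii) to force $\Psi(|\bar x|)<\Psi(|x^*|)$ whenever $|\bar x|\neq |x^*|$, and then contradicts this via the convergence of $\{\Psi(|x^k|)\}$ from part~(i), which forces $\Psi(|\bar x|)=\Psi(|x^*|)$. You instead argue directly: the triangle inequality splits the target through $|x^{k+1}|$, the term $\|\,|\tilde x^{k+1}|-|x^{k+1}|\,\|$ vanishes by part~(ii) and $\epsilon_k\le\tau_k\downarrow 0$, and for $\|\,|x^{k+1}|-|x^k|\,\|$ you telescope the nonnegative majorization gaps $g_i^k$ (coming from \eqref{major}) against \eqref{eq:0077831} to obtain $\sum_k\sum_i g_i^k<\infty$, then upgrade $g_i^k\to 0$ to vanishing increments by a compactness argument: the gap function $h(t,s)$ is jointly continuous (because $\psi'_+$ is continuous under Assumption~\ref{ass:1.1}(i)) and, by strict concavity, vanishes only on the diagonal --- including at $t=0$, where the supporting slope is $\iota_\psi$, as you correctly note --- so it has a positive minimum on $\{(t,s)\in[0,R]^2:\;|s-t|\ge\delta\}$. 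All of these steps check out, and your diagnosis of the crux is accurate: strictness of the concavity is exactly what rules out $h$ vanishing off the diagonal. Your route buys slightly more quantitative information (summability of the gaps and the asymptotic regularity $\|\,|x^{k+1}|-|x^k|\,\|\to 0$) and avoids subsequence extraction altogether; the paper's route is shorter because, once $\{\Psi(|x^k|)\}$ is known to converge, strict concavity need only be invoked at a single pair of limit points rather than uniformly over a compact set.
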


\begin{proof}

\rm{(i)}: Note that
\[
\begin{aligned}
  \Psi(|x^{k+1}|) - \Psi(|x^k|) & = \sum_{i=1}^{n}(\psi(|x^{k+1}_{i}|)-\psi(|x^{k}_{i}|))\overset{\rm (a)}\leq\sum_{i=1}^{n}\psi'_{+}(|x^{k}_{i}|){(|x^{k+1}_{i}|-|x^{k}_{i}|)}\overset{\rm (b)}\leq \mu_k,
\end{aligned}
\]
where (a) follows from \eqref{major} and (b) holds because of \eqref{eq:0077831}. In particular, for every $k$, we have $\Psi(|x^{k}|)\leq \Psi(|x^{0}|) + \sum_{i=0}^{k-1}\mu_i\le \Psi(|x^{0}|) + \sum_{i=0}^\infty\mu_i <\infty$ since $\{\mu_k\}$ is summable. This together with the assumption $\lim_{t\to\infty}\psi(t)=\infty$ in Assumption \ref{ass:1.1}(i) implies that the sequence $\{x^{k}\}$ is bounded. In addition, the above display together with the summability of $\{\mu_k\}$ implies that
\[
\Psi(|x^{k+1}|) + \sum_{i=k+1}^\infty\mu_i \le \Psi(|x^k|) + \sum_{i=k}^\infty\mu_i,
\]
showing that the sequence $\{\Psi(|x^k|) + \sum_{i=k}^\infty\mu_i\}$ is nonincreasing. Since this sequence is also bounded from below (by zero), it is convergent. This together with the summability of $\{\mu_k\}$ further implies that $\{\Psi(|x^k|)\}$ is convergent.

We next prove the boundedness of $\{\tilde x^{k}\}$. If $\|A_k\tilde{x}^{k+1}-b^k\|^2 \le \sigma_k$ for all large $k$, then $\tilde{x}^{k+1}=x^{k+1}$ for all large $k$ and the boundedness of $\{\tilde{x}^{k}\}$ follows from the boundedness of $\{x^{k}\}$.

Now, suppose that $\|A_k\tilde{x}^{k+1}-b^k\|^2 > \sigma_k$ infinitely often. Define
\[
\mathcal{I}:=\{k:\; \|A_k\tilde{x}^{k+1}-b^k\|^2 > \sigma_k\}\ \ \ {\rm and}\ \ \ \theta_k:= 1-\frac{\sqrt{\sigma_{k}}}{\|A_{k}\tilde{x}^{k+1}-b^{k}\|} \ \ \forall k\in {\cal I}.
\]
Observe that $\{\theta_k:\; k\in{\cal I}\}\subseteq(0,1)$ thanks to the fact that $\sigma_k>0$ (see Lemma~\ref{lem:prep}(i)). We claim that $\sup_{k\in{\mathcal{I}}}{\theta_{k}}<1$. Suppose to the contrary that $\sup_{k\in{\mathcal{I}}}{\theta_{k}}=1$. Then there exists a subsequence $\{{k_{j}}\}\subseteq{\mathcal{I}}$ such that
$\lim_{j\rightarrow\infty}{\theta_{k_{j}}}=1$. In view of the definition of $\theta_k$, this means
\begin{equation}\label{221111}
\lim_{j\rightarrow\infty}\frac{\sqrt{\sigma_{k_{j}}}}{\|A_{k_{j}}\tilde{x}^{k_{j}+1}-b^{k_{j}}\|}={0}.
\end{equation}
On the other hand, using \eqref{eq:0077842} and the fact that $\|\tilde u^{k+1}\|^2\le \sigma_k$ (see \eqref{eq:0077742}), we have $\|A_{k_{j}}\tilde{x}^{k_{j}+1}-b^{k_{j}}\|
\le \|A_{k_{j}}\tilde{x}^{k_{j}+1}-b^{k_{j}} - \tilde u^{k_j+1}\| + \|\tilde u^{k_j+1}\|\leq{\epsilon_{k_{j}}+\sqrt{\sigma_{k_{j}}}}$. Then
\begin{equation*}
1\overset{\rm (a)}=\lim_{j\rightarrow\infty}{\frac{1}{1+\epsilon_{k_j}/\sqrt{\sigma_{k_j}}}}
=\lim_{j\rightarrow\infty}\frac{\sqrt{\sigma_{k_{j}}}}{\sqrt{\sigma_{k_{j}}}+\epsilon_{k_{j}}}
\leq\lim_{j\rightarrow\infty}\frac{\sqrt{\sigma_{k_{j}}}}{\|A_{k_{j}}\tilde{x}^{k_{j}+1}-b^{k_{j}}\|},
\end{equation*}
where (a) holds because $\epsilon_k \downarrow 0$ and $0<\frac{\epsilon_{k}}{\sqrt{\sigma_{k}}}\leq{\frac{\epsilon_{k}}{\sqrt{\epsilon_{k}}}}=\sqrt{\epsilon_{k}}$ (thanks to the fact that $\epsilon_{k}\leq \sigma_{k}$).
This contradicts \eqref{221111}. Thus, we must have $\sup_{k\in{\mathcal{I}}}{\theta_{k}}<1$, which means $r:=\inf_{k\in{\mathcal{I}}}(1-\theta_{k})>0$. This together with the fact that $\{\theta_k:\; k\in{\cal I}\}\subseteq(0,1)$ yields
\begin{equation}\label{haha}
1\leq\frac{1}{1-\theta_{k}}\leq{\frac{1}{r}} < \infty\ \ \ \forall k\in {\cal I}.
\end{equation}
Now, from the definition of $\theta_k$ and $x^{k+1}$, we have $x^{k+1}=\theta_{k} A^\dagger b +(1-\theta_{k}){\tilde{x}^{k+1}}$ for all $k\in {\cal I}$. Thus, we have for $k\in {\cal I}$ that
\[
\|\tilde{x}^{k+1}\|=\left\|\frac{1}{1-\theta_{k}}(x^{k+1}-\theta_{k}A^\dagger b)\right\|\le \frac1r(\|x^{k+1}\| + \|A^\dagger b\|),
\]
where the inequality follows from \eqref{haha}.
Combining this with the boundedness of $\{x^{k}\}$ and the fact that $\tilde x^{k+1} = x^{k+1}$ when $k\notin {\cal I}$ gives the boundedness of $\{\tilde{x}^{k}\}$.

\rm{(ii)}: If $\|A_k\tilde x^{k+1}-b^k\|^2\le \sigma_k$, then the desired conclusion obviously holds because $x^{k+1}=\tilde{x}^{k+1}$.

Now, suppose $\|A_k\tilde x^{k+1}-b^k\|^2> \sigma_k$. Then we have from the definition of $x^{k+1}$ that
\[
x^{k+1}=A^\dagger b+\sqrt{\sigma_{k}}\frac{\tilde{x}^{k+1}-A^\dagger b}{\|A_{k}\tilde{x}^{k+1}-b^{k}\|}.
\]
Therefore,
\begin{equation}\label{haha2}
\begin{aligned}
&\|x^{k+1}-\tilde{x}^{k+1}\|=\left\|A^\dagger b-\tilde{x}^{k+1}+\sqrt{\sigma_{k}}\frac{\tilde{x}^{k+1}-A^\dagger b}{\|A_{k}\tilde{x}^{k+1}-b^{k}\|}\right\|\\
&\overset{\rm (a)}=\left(1-\frac{\sqrt{\sigma_{k}}}{\|A_{k}\tilde{x}^{k+1}-b^{k}\|}\right)\|A^\dagger b-\tilde{x}^{k+1}\|=\frac{\|A_{k}\tilde{x}^{k+1}-b^{k}\|-\sqrt{\sigma_{k}}}{\|A_{k}\tilde{x}^{k+1}-b^{k}\|}\|A^\dagger b-\tilde{x}^{k+1}\|\\
&\overset{\rm (b)}\le\frac{\|A_{k}\tilde{x}^{k+1}-b^{k}\|-\sqrt{\sigma_{k}}}{\sqrt{\sigma_{k}}}\|A^\dagger b-\tilde{x}^{k+1}\|.
\end{aligned}
\end{equation}
where (a) and (b) hold because $\|A_k\tilde x^{k+1}-b^k\|> \sqrt{\sigma_k}$.
Finally, notice that
\begin{align*}
\|A_{k}\tilde{x}^{k+1}-b^{k}\|-\sqrt{\sigma_{k}}&\leq{\|A_{k}\tilde{x}^{k+1}-b^{k}\|-\|\tilde{u}^{k+1}\|}\\
&\leq{\|A_{k}\tilde{x}^{k+1}-b^{k}-\tilde{u}^{k+1}\|}\leq{\epsilon_{k}},
\end{align*}
where the first inequality follows from $\|\tilde{u}^{k+1}\|\leq{\sqrt{\sigma_{k}}}$ (thanks to \eqref{eq:0077742}) and the last inequality holds because of \eqref{eq:0077842}.
Combining this with \eqref{haha2} gives
\[
\|x^{k+1}-\tilde{x}^{k+1}\|\le\frac{\epsilon_{k}}{\sqrt{\sigma_{k}}}\|A^\dagger b-\tilde{x}^{k+1}\|\leq{\sqrt{\epsilon_{k}}\|A^\dagger b-\tilde{x}^{k+1}\|} \le M \sqrt{\epsilon_k},
\]
where the second inequality holds because $\epsilon_k \le \sigma_k$, and the last inequality holds with $M := \|A^\dagger b\| + \sup_{k}\|\tilde x^{k+1}\|$, which is finite thanks to item (i).
This proves (ii).

{\rm (iii)}: Let $\{x^{k_{j}}\}$ and $\{\tilde{x}^{k_{j}+1}\}$ be two convergent subsequences of $\{x^{k}\}$ and $\{\tilde{x}^{k+1}\}$ respectively such that
$\lim_{j\rightarrow{\infty}}x^{k_{j}}=x^{*}$ and $\lim_{j\rightarrow{\infty}}\tilde{x}^{k_{j}+1}=\bar{x}$ for some $x^*$ and $\bar x\in \R^n$. We will show that $|\bar{x}| = |x^{*}|$.

Suppose to the contrary that $|\bar{x}|\neq|x^{*}|$, then
\begin{align}\label{eq:221218}
&\Psi(|\bar{x}|)
<\Psi(|x^{*}|)+\sum_{i=1}^{n}\psi'_{+}(|x^{*}_{i}|)(|\bar x_{i}|-|x^{*}_{i}|)=\Psi(|x^{*}|)+\lim_{j\rightarrow\infty}\sum_{i=1}^{n}\psi'_{+}(|x^{k_{j}}_{i}|)(|{\tilde{x}_{i}^{k_{j}+1}}|-|x^{k_{j}}_{i}|)\nonumber\\
&=\Psi(|x^{*}|)+\lim_{j\rightarrow\infty}\sum_{i=1}^{n}\psi'_{+}(|x^{k_{j}}_{i}|)(|{\tilde{x}_{i}^{k_{j}+1}}|-|{{x}_{i}^{k_{j}+1}}|+|{{x}_{i}^{k_{j}+1}}|-|x^{k_{j}}_{i}|)\nonumber\\
&\le\Psi(|x^{*}|)+\limsup_{j\rightarrow\infty}\sum_{i=1}^{n}\psi'_{+}(|x^{k_{j}}_{i}|)(|{\tilde{x}_{i}^{k_{j}+1}}|-|{{x}_{i}^{k_{j}+1}}|)
+\limsup_{j\rightarrow\infty}\sum_{i=1}^{n}\psi'_{+}(|x^{k_{j}}_{i}|)(|{{x}_{i}^{k_{j}+1}}|-|x^{k_{j}}_{i}|)\nonumber\\
&\leq{\Psi(|x^{*}|)},
\end{align}
where the first inequality follows from the strict concavity of $\psi$, and the last inequality holds because $\limsup_{j\rightarrow\infty}\sum_{i=1}^{n}\psi'_{+}(|x^{k_{j}}_{i}|)(|{\tilde{x}_{i}^{k_{j}+1}}|-|{{x}_{i}^{k_{j}+1}}|) = 0$ (a consequence of items (i) and (ii), the continuity of $\psi'_+$, and the fact that $\epsilon_k\downarrow 0$) and
\eqref{eq:0077831} (which gives $\sum_{i=1}^{n}\psi'_{+}(|x^{k_{j}}_{i}|)(|{{x}_{i}^{k_{j}+1}}|-|x^{k_{j}}_{i}|)\leq\mu_{k_j}$).

On the other hand, notice from $\lim_{j\rightarrow{\infty}}\tilde{x}^{k_{j}+1}=\bar{x}$ and item (ii) that $\lim_{j\rightarrow{\infty}}{x}^{k_{j}+1}=\bar{x}$. Then $\lim_{j\rightarrow{\infty}}(\Psi(|\tilde{x}^{k_{j}+1}|)-\Psi(|{x}^{k_{j}+1}|))=0$ by the continuity of $\Psi$. Moreover, we have
\begin{align}
\Psi(|{x^{*}}|)&=\lim_{j\rightarrow{\infty}}\Psi(|x^{k_{j}}|)=\lim_{j\rightarrow{\infty}}(\Psi(|x^{k_{j}}|)+\Psi(|\tilde{x}^{k_{j}+1}|)-\Psi(|{x}^{k_{j}+1}|)-\Psi(|\tilde{x}^{k_{j}+1}|)+\Psi(|{x}^{k_{j}+1}|))\nonumber\\
&=\lim_{j\rightarrow{\infty}}\Psi(|\tilde{x}^{k_{j}+1}|)+\lim_{j\rightarrow{\infty}}(\Psi(|x^{k_{j}}|)-\Psi(|{x}^{k_{j}+1}|))
-\lim_{j\rightarrow{\infty}}(\Psi(|\tilde{x}^{k_{j}+1}|)-\Psi(|{x}^{k_{j}+1}|))\nonumber\\
&=\Psi(|\bar{x}|),\nonumber
\end{align}
where we also used in the last equality the fact that $\{\Psi(|x^{k}|)\}$ is convergent (see item (i)). This contradicts \eqref{eq:221218}.
Thus, we must have $|\bar{x}|=|x^{*}|$. This completes the proof.
\end{proof}

\begin{remark}[Boundedness of some auxiliary sequences]\label{rem:221211}
Consider \eqref{eq:0.1} under Assumptions \ref{ass:1.1} and \ref{ass:0.2}.
As an immediate consequence of Proposition~\ref{prop:221121}(i), the sequences $\{w^{k}\}$, $\{A_{k}\}$, $\{b^{k}\}$ and $\{\sigma_k\}$ generated by $\IR$ are all bounded. In addition, the sequence $\{A_{k}\tilde{x}^{k+1}-b^{k}\}$ is also bounded.
\end{remark}

From \eqref{eq:0077742}, we see that for every $k$, there exists a $\xi^{k}\in{\mathbb{R}^{n}}$ with $\|\xi^{k}\|\leq{\epsilon_{k}}$ such that
\[
\xi^{k}\in w^{k}\circ{\partial{\|\tilde{x}^{k+1}\|_1}}+A_{k}^{T}N_{{\|\cdot\|^{2}}\leq{\sigma_{k}}}(\tilde{u}^{k+1}).
\]
From the definition of normal cone and noting that $\sigma_k > 0$ (see Lemma~\ref{lem:prep}), we deduce that there exists a ${\tilde{\lambda}}_{k}\geq{0}$ such that
\begin{equation}\label{eq:22180}
\xi^{k}\in{w^{k}\circ{\partial{\|\tilde{x}^{k+1}\|_1}}+{\tilde{\lambda}}_{k}A_{k}^{T}\tilde{u}^{k+1}}\ \ {\rm and}\ \ \tilde\lambda_k(\|\tilde u^{k+1}\|^2 - \sigma_k) = 0.
\end{equation}
Moreover, if we define
\begin{equation}\label{eq:221190}
\tilde{v}^{k+1}:=A_{k}\tilde{x}^{k+1}-b^{k}-\tilde{u}^{k+1},
\end{equation}
then we see from \eqref{eq:0077842} and the definition of $\epsilon_k$ that
\begin{equation}\label{eq:221214}
\|\tilde{v}^{k+1}\|\leq{\epsilon_{k}}\leq\min\{{\sigma_{k}},\sqrt{\sigma_k}\}.
\end{equation}
Furthermore, we can deduce from \eqref{eq:22180} and the definition of $\tilde{v}^{k+1}$ that
\begin{equation}\label{relation00}
\tilde{x}^{k+1}\in \Argmin\left\{{(w^{k})^{T}}|x|-(\xi^{k})^{T}x+0.5\tilde{\lambda}_{k}(\|A_{k}x-b^{k}-\tilde{v}^{k+1}\|^{2}-\sigma_{k})\right\}
\end{equation}
and
\begin{equation}\label{relation01}
\tilde{\lambda}_{k}(\|A_{k}\tilde{x}^{k+1}-b^{k}-\tilde{v}^{k+1}\|^{2}-\sigma_{k})=0.
\end{equation}
The next theorem exploits the above observations to study properties of $\{\tilde \lambda_k\}$ and establish the subsequential convergence of the sequence $\{x^k\}$ generated by $\IR$.

\begin{theorem}[Subsequential convergence of $\IR$]\label{the3.1}
Consider \eqref{eq:0.1} under Assumptions \ref{ass:1.1} and \ref{ass:0.2}. Let $\{x^{k}\}$ and $\{\tilde x^k\}$ be the sequences generated by $\IR$ and let $\tilde\lambda_k$ be defined in \eqref{eq:22180}. Then the following statements hold.
\begin{enumerate}[{\rm (i)}]
  \item It holds that $\liminf_{k\rightarrow{\infty}}{\tilde{\lambda}}_{k}>0$.
  \item We have $\lim_{k\to \infty} \phi'_+((b_i - a_i^Tx^{k})^2)(a_i^T(\tilde x^{k+1} - x^k))=0$ for all $i$.
  \item Every accumulation point $x^*$ of $\{x^{k}\}$ satisfies $\Phi((b-Ax^*)\circ (b-Ax^*))=\sigma$.
  \item The sequence $\{\tilde{\lambda}_{k}\}$ is bounded.
  \item Every accumulation point of $\{x^{k}\}$ is a stationary point of \eqref{eq:0.1}.
\end{enumerate}
\end{theorem}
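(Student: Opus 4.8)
The plan is to mine the approximate KKT residuals recorded in \eqref{eq:22180}--\eqref{relation01}, using throughout that every iterate lies in $\mathfrak{F}$ and that the weights $w^k=\Psi'_+(|x^k|)$ are \emph{uniformly bounded below}: since $\{x^k\}$ is bounded by Proposition~\ref{prop:221121}(i) and $\psi'_+$ is continuous and strictly positive, there is $c>0$ with $w^k_i\ge c$ for all $k,i$. For (i) I would argue by contradiction. If $\tilde\lambda_{k_j}\to 0$ along a subsequence, then in \eqref{eq:22180} the term $\tilde\lambda_{k_j}A_{k_j}^T\tilde u^{k_j+1}$ vanishes (by Remark~\ref{rem:221211}) and $\|\xi^{k_j}\|\le\epsilon_{k_j}\to0$, so the subgradient part $w^{k_j}\circ g^{k_j}$ (with $g^{k_j}\in\partial\|\tilde x^{k_j+1}\|_1$) tends to $0$; since $w^{k_j}_i\ge c$ this forces $g^{k_j}_i\to0$, hence $\tilde x^{k_j+1}=0$ for large $j$. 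Proposition~\ref{prop:221121}(iii) then gives $x^{k_j}\to0$, contradicting $x^{k_j}\in\mathfrak{F}$, $0\notin\mathfrak{F}$ (Assumption~\ref{ass:1.1}(iii)), and the closedness of $\mathfrak{F}$.

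For (ii) and (iii), which I would prove together, I would use that \eqref{relation00} makes $\tilde x^{k+1}$ an exact minimizer of the convex function $h_k(x):=\|w^k\circ x\|_1-(\xi^k)^Tx+\tfrac{\tilde\lambda_k}{2}\|A_kx-b^k-\tilde v^{k+1}\|^2$. The engine is the ``strong convexity modulo $A_k$'' bound $h_k(x^k)-h_k(\tilde x^{k+1})\ge\tfrac{\tilde\lambda_k}{2}\|A_k(\tilde x^{k+1}-x^k)\|^2$. On the left the linear/nonsmooth part tends to $0$ (by Proposition~\ref{prop:221121}(iii) and $\epsilon_k\to0$), while the quadratic part equals $\tfrac{\tilde\lambda_k}{2}(\|A_kx^k-b^k-\tilde v^{k+1}\|^2-\sigma_k)$ via \eqref{relation01} and $\tilde\lambda_k>0$ for large $k$; a direct computation with the definition of $\sigma_k$ gives $\|A_kx^k-b^k\|^2-\sigma_k=\Phi(y^k\circ y^k)-\sigma=-\Delta_k$ where $\Delta_k:=\sigma-\Phi(y^k\circ y^k)\ge0$. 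Collecting terms and dividing by $\tilde\lambda_k\ge c'>0$ (from (i)) yields $\|A_k(\tilde x^{k+1}-x^k)\|^2+\Delta_k\to0$. This gives (ii), since $\phi'_+((b_i-a_i^Tx^k)^2)\,a_i^T(\tilde x^{k+1}-x^k)=v^k_i\,(A_k(\tilde x^{k+1}-x^k))_i\to0$, and (iii), since $\Delta_k\to0$ forces $\Phi((b-Ax^*)\circ(b-Ax^*))=\sigma$ at any accumulation point by continuity.

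For (iv) I would again argue by contradiction: if $\tilde\lambda_{k_j}\to\infty$, then $\|A_{k_j}^T\tilde u^{k_j+1}\|\to0$ because $\tilde\lambda_{k_j}A_{k_j}^T\tilde u^{k_j+1}=\xi^{k_j}-w^{k_j}\circ g^{k_j}$ is bounded. Writing $A_k^T\tilde u^{k+1}=-\sum_i\phi'_+((b_i-a_i^Tx^k)^2)(b_i-a_i^T\tilde x^{k+1})a_i-A_k^T\tilde v^{k+1}$ and using (ii) to replace $\tilde x^{k+1}$ by $x^k$ in the sum up to a vanishing error, a passage to the limit along $x^{k_j}\to x^*$ gives $\sum_i\phi'_+((b_i-a_i^Tx^*)^2)(b_i-a_i^Tx^*)a_i=0$; but $\Phi((b-Ax^*)\circ(b-Ax^*))=\sigma$ by (iii), contradicting the MFCQ of Proposition~\ref{pro:17}(ii).

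For (v) I would take $x^{k_j}\to x^*$ and pass to further subsequences with $\tilde\lambda_{k_j}\to\lambda^*$ (bounded by (iv), and $\lambda^*\ge\liminf\tilde\lambda_k>0$ by (i)), $\tilde x^{k_j+1}\to\bar x$, and $\tilde x^{k_j+1}-x^{k_j}\to d^*$. Here \eqref{eq:26} holds as $\mathfrak{F}$ is closed and \eqref{eq:25} holds by (iii). To reach \eqref{eq:27}, I would pass to the limit in the convex subgradient inequality $\|w^k\circ z\|_1\ge\|w^k\circ\tilde x^{k+1}\|_1+\langle\xi^k-\tilde\lambda_kA_k^T\tilde u^{k+1},\,z-\tilde x^{k+1}\rangle$ coming from \eqref{eq:22180}; as in (iv), $\tilde\lambda_{k_j}A_{k_j}^T\tilde u^{k_j+1}\to-\lambda^*G^*$ with $G^*:=\sum_i\phi'_+((b_i-a_i^Tx^*)^2)(b_i-a_i^Tx^*)a_i$, and $\|w^{k_j}\circ\tilde x^{k_j+1}\|_1\to\sum_i\psi'_+(|x^*_i|)|x^*_i|$ because $|\bar x|=|x^*|$ (Proposition~\ref{prop:221121}). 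I expect the \textbf{main obstacle} to be the resulting sign ambiguity: Proposition~\ref{prop:221121} controls only $|\tilde x^{k_j+1}|$, so $\bar x=x^*+d^*$ may differ from $x^*$ in sign, and the limiting inequality is centered at $\bar x$ rather than $x^*$. The resolution is exactly (ii): since $\|A_{k_j}(\tilde x^{k_j+1}-x^{k_j})\|\to0$ and $A_{k_j}\to A^*:={\rm Diag}(\sqrt{\Phi'_+((b-Ax^*)\circ(b-Ax^*))})A$, the displacement $d^*$ lies in $\ker A^*$, whereas $G^*\in{\rm range}\,(A^*)^T$, so $\langle G^*,d^*\rangle=0$ and $\langle G^*,z-\bar x\rangle=\langle G^*,z-x^*\rangle$. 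The limit then reads $\sum_i\psi'_+(|x^*_i|)|z_i|\ge\sum_i\psi'_+(|x^*_i|)|x^*_i|+\lambda^*\langle G^*,z-x^*\rangle$ for all $z$, i.e. $\lambda^*G^*\in\Psi'_+(|x^*|)\circ\partial\|x^*\|_1$, which is precisely \eqref{eq:27} with $\lambda=\lambda^*/2$.
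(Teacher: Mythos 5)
Your proof is correct. Parts (i) and (iv) are essentially the paper's own arguments (your componentwise use of a uniform lower bound on $w^k$ in (i) is a cosmetic variation of the paper's subsequence extraction), but parts (ii), (iii) and (v) follow a genuinely different and leaner route. For (ii)--(iii): since $x^k$ need not satisfy the \emph{shifted} constraint $\|A_kx-b^k-\tilde v^{k+1}\|^2\le\sigma_k$, the paper manufactures an auxiliary point $\hat x^k=t_kx^k+(1-t_k)A^\dagger b$, with $t_k=\min\{1,q_k\}$ and $q_k$ the larger root of $t\mapsto\|t(A_kx^k-b^k)-\tilde v^{k+1}\|^2-\sigma_k$, proves $t_k\to 1$ by a lengthy liminf estimate, and runs the strong-convexity-modulo-$A_k$ comparison at $\hat x^k$, where the shifted-constraint term is nonpositive and can be dropped against $\tilde\lambda_k\ge 0$. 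You compare at $x^k$ itself; this creates the extra term $\tfrac{\tilde\lambda_k}{2}\bigl(-2(\tilde v^{k+1})^T(A_kx^k-b^k)+\|\tilde v^{k+1}\|^2\bigr)$, which your sketch elides and should be written out explicitly, but after dividing the whole inequality by $\tilde\lambda_k\ge c'>0$ (available from (i)) this term is $O(\epsilon_k)$ \emph{independently of} $\tilde\lambda_k$, so the not-yet-available boundedness of the multipliers is never needed. The payoff is that the $q_k$/$t_k$ machinery disappears entirely and (iii) comes for free along the whole sequence, via $\sigma-\Phi(y^k\circ y^k)\to 0$, whereas the paper proves (iii) separately by expanding the complementarity identity \eqref{relation01} and invoking item (ii). For (v): the paper anchors the subsequence at $x^{k_j+1}\to x^*$, so closedness of the subdifferential lands in $\partial\|x^*\|_1$ directly, but it must then prove the dichotomy \eqref{keyrelationship} by a concavity/maximizer argument in order to transfer the weights $\phi'_+((b_i-a_i^T\bar x)^2)$ to the point $x^*$; you anchor instead at $x^{k_j}\to x^*$, so the $\phi'_+$-weights converge to the right values automatically and the mismatch migrates to the $\ell_1$ term, which you repair by taking limits in the convex subgradient inequality and re-centering through $\langle G^*,\bar x-x^*\rangle=0$, where $G^*=\sum_{i}\phi'_+((b_i-a_i^Tx^*)^2)(b_i-a_i^Tx^*)a_i$ --- a consequence of item (ii), since $\bar x-x^*\in\ker A^*$ while $G^*\in{\rm range}((A^*)^T)$. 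Both repairs rest on item (ii); yours trades the paper's case analysis on maximizers of $\phi$ for elementary linear algebra, while the paper's version yields the structural relation \eqref{keyrelationship} between distinct accumulation points as a by-product of independent interest.
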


\begin{proof}

\rm{(i)}: Suppose to the contrary that $\liminf_{k\rightarrow{\infty}}{\tilde{\lambda}}_{k}=0$. Then there exists a subsequence $\{\tilde\lambda_{k_{j}}\}$ of $\{\tilde\lambda_k\}$ such that $\tilde\lambda_{k_{j}}\rightarrow{0}$ with $\tilde\lambda_{k_{j}}\geq0$ for all $j$. Since $\{x^k\}$ and $\{\tilde x^{k}\}$ are bounded in view of Proposition~\ref{prop:221121}(i), by passing to further subsequences if necessary, we assume without loss of generality that $x^{k_j}\rightarrow{x^{*}}$ and ${\tilde {x}^{k_{j}+1}}\rightarrow{\hat{x}}$ for some $x^*$ and $\hat x$.
Passing to the limit as $j\rightarrow{\infty}$ in \eqref{eq:22180} with $k_j$ in place of $k$, and recalling $\epsilon_k\downarrow 0$ and the boundedness of $\{A^T_{k}\tilde u^{k+1}\}$ (see Remark~\ref{rem:221211} and note that $\|\tilde u^{k+1}\|^2\le \sigma_k$), we obtain
\[
0\in\Psi'_{+}(|x^{*}|)\circ{\partial\|\hat{x}\|_1},
\]
which means $0\in{\partial\|\hat{x}\|_1}$ since $\Psi'_{+}(|x^{*}|)>0$. This implies that $\hat{x}=0$. On the other hand, since ${\tilde {x}^{k_{j}+1}}\rightarrow{\hat{x}}$, we have from Proposition~\ref{prop:221121}(ii) and $\epsilon_k\downarrow 0$ that $\lim_{j\to \infty}x^{k_j+1} = \hat x$. Since $\{x^{k}\}\subseteq {\frak F}$ (thanks to Lemma~\ref{lem:prep}(ii)), $0\notin{\mathfrak{F}}$ and ${\frak F}$ is closed, we conclude that $\hat x \neq 0$, leading to a contradiction. Therefore, we must have $\liminf_{k\rightarrow{\infty}}{\tilde{\lambda}}_{k}>0$.

\rm{(ii)}: With $\tilde v^{k+1}$ defined in \eqref{eq:221190} and noting \eqref{eq:221214}, we have
\begin{equation}\label{delta}
\begin{aligned}
\Delta_k&:=
  [(\tilde{v}^{k+1})^T(A_{k}x^{k}-b^{k})]^{2}-\|A_{k}x^{k}-b^{k}\|^{2}(\|\tilde{v}^{k+1}\|^{2}-\sigma_{k})\\
  &= (\sigma_{k} - \|\tilde{v}^{k+1}\|^{2})\|A_{k}x^{k}-b^{k}\|^{2} + [(\tilde{v}^{k+1})^T(A_{k}x^{k}-b^{k})]^{2}\\
  & \ge [(\tilde{v}^{k+1})^T(A_{k}x^{k}-b^{k})]^{2} \ge 0.
\end{aligned}
\end{equation}
Now, define
\begin{equation}\label{q}
  q_k := \begin{cases}
   \displaystyle \frac{(\tilde{v}^{k+1})^T(A_{k}x^{k}-b^{k}) + \sqrt{\Delta_k}}{\|A_kx^k - b^k\|^2} & {\rm if}\ \|A_kx^k - b^k\| > 0,\\
   1 & {\rm otherwise}.
  \end{cases}
\end{equation}
Then we see from \eqref{delta} that $q_k$ is well defined and $q_k \ge 0$ for all $k$. Next, define
\begin{equation}\label{thatx}
t_{k}:=\min{\{1,q_{k}\}}\ \ {\rm and}\ \ \hat{x}^{k}:=t_{k}x^{k}+(1-t_{k})A^\dagger b.
\end{equation}
Then $t_k \in [0,1]$ and we have
\begin{equation}\label{eq:2211210}
\begin{aligned}
&\|A_{k}{\hat{x}^{k}}-b^{k}-\tilde{v}^{k+1}\|^{2}-{\sigma_{k}}\\
=&\|A_{k}(t_{k}x^{k}+(1-t_{k})A^\dagger b)-(t_{k}b^{k}+(1-t_{k})b^{k})-\tilde{v}^{k+1}\|^{2}-{\sigma_{k}}\\
=&\|t_{k}(A_{k}x^{k}-b^{k})+(1-t_{k})(A_{k}A^\dagger b-b^{k})-\tilde{v}^{k+1}\|^{2}-{\sigma_{k}}\\
=&\|t_{k}(A_{k}x^{k}-b^{k})-\tilde{v}^{k+1}\|^{2}-{\sigma_{k}} \leq{0},
\end{aligned}
\end{equation}
where the third equality follows from the fact $AA^\dagger b = b$ and the definitions of $A_{k}$ and $b^{k}$, while the inequality holds because of the definition of $q_k$: indeed, if $A_kx^k - b^k = 0$, the inequality follows directly from \eqref{eq:221214}, while if $A_kx^k - b^k \neq 0$, the inequality holds true because $q_k$ is defined to be the larger root of the quadratic function $t\mapsto \|t(A_{k}x^{k}-b^{k})-\tilde{v}^{k+1}\|^{2}-{\sigma_{k}}$.
Moreover, we have from \eqref{thatx} and the boundedness of $\{x^k\}$ (see Proposition~\ref{prop:221121}(i)) that $\{\hat{x}^{k}\}$ is also bounded.

Let $J:= \{k:\; \|A_kx^k - b^k\| > 0\}$.
Then, writing $\tilde y^k:= A_kx^k-b^k$ for notational simplicity, we have from \eqref{delta}, \eqref{q} and the fact $\epsilon_k\downarrow 0$ that
\begin{eqnarray*}
&&\liminf_{J\ni k\rightarrow{\infty}}{q_{k}}=\liminf_{J\ni k\rightarrow{\infty}}\frac{(\tilde{v}^{k+1})^T(A_{k}x^{k}-b^{k}) + \sqrt{\Delta_k}}{\|A_kx^k - b^k\|^2}\\
&&=\liminf_{J\ni k\rightarrow{\infty}}\frac{(\tilde{v}^{k+1})^T\tilde y^k + \sqrt{(\sigma_{k} - \|\tilde{v}^{k+1}\|^{2})\|\tilde y^k\|^{2} + [(\tilde{v}^{k+1})^T\tilde y^k]^{2}}}{\|\tilde y^k\|^2}\\
&&\overset{\rm (a)}\ge \liminf_{J\ni k\rightarrow{\infty}}\frac{-\|\tilde{v}^{k+1}\|\|\tilde y^k\| + \sqrt{(\sigma_{k} - \|\tilde{v}^{k+1}\|^{2})\|\tilde y^k\|^{2}} - \sqrt{[(\tilde{v}^{k+1})^T\tilde y^k]^{2}}}{\|\tilde y^k\|^2}\\
&& \overset{\rm (b)}\ge \liminf_{J\ni k\rightarrow{\infty}}\frac{-2\|\tilde{v}^{k+1}\|\|\tilde y^k\| + \sqrt{\sigma_{k} - \|\tilde{v}^{k+1}\|^{2}}\|\tilde y^k\|}{\|\tilde y^k\|^2}\\
&& = \liminf_{J\ni k\rightarrow{\infty}}\frac{-2\|\tilde{v}^{k+1}\| + \sqrt{\sigma_{k} - \|\tilde{v}^{k+1}\|^{2}}}{\|\tilde y^k\|}
\overset{\rm (c)}\ge \liminf_{J\ni k\rightarrow{\infty}}\frac{\sqrt{\sigma_k}-3\|\tilde{v}^{k+1}\|}{\|A_kx^k - b^k\|}\\
&&\overset{\rm (d)}\ge  \liminf_{J\ni k\rightarrow{\infty}}(1-3\sqrt{\epsilon_k})\frac{\sqrt{\sigma_k}}{\|A_kx^k - b^k\|}=\liminf_{J\ni k\rightarrow{\infty}}\frac{\sqrt{\sigma_{k}}}{\|A_{k}x^{k}-b^{k}\|}\geq{1},
\end{eqnarray*}
where: (a) follows from the Cauchy-Schwartz inequality and the elementary relation that $\sqrt{a+b}\ge \sqrt{a} - \sqrt{b}$ for all $a$, $b\ge 0$, (b) follows from the Cauchy-Schwartz inequality, (c) follows from the relation $\sqrt{a-b}\ge \sqrt{a} - \sqrt{b}$ for all $a\ge b\ge 0$, (d) holds because $\epsilon_k\le \sigma_k$ so that \eqref{eq:221214} implies $\|\tilde v^{k+1}\|\le \epsilon_k = \sqrt{\epsilon_k}\sqrt{\epsilon_k}\le \sqrt{\epsilon_k}\sqrt{\sigma_k}$, and the last inequality holds because $\|A_{k}x^{k}-b^{k}\|^{2}\leq{\sigma_{k}}$, which is a consequence of the definition of $\sigma_k$ and the fact $x^{k}\in{\mathfrak{F}}$ (see Lemma~\ref{lem:prep}(ii)).
Consequently, we can deduce from the definition of $t_{k}$ in \eqref{thatx} that
$\lim_{k\rightarrow{\infty}}{t_{k}}=1$.
This together with the boundedness of $\{x^k\}$ (see Proposition~\ref{prop:221121}(i)) yields
\begin{align}\label{eq:22185}
\lim_{k\rightarrow{\infty}}\|\hat{x}^{k}-x^{k}\|=\lim_{k\rightarrow{\infty}}\|{t_{k}}x^{k}+(1-t_{k})A^\dagger b-x^{k}\|=0.
\end{align}

With the help of the auxiliary sequence $\{\hat x^k\}$, we are now ready to prove that
\[
\lim_{k\rightarrow{\infty}}{{\phi'_{+}((b_{i}-a_{i}^{T}x^{k})^{2})}(a_{i}^{T}(\tilde{x}^{k+1}-{x}^{k}))}=0.
\]
To this end, we note from \eqref{relation01} that
\begin{equation}\label{relation}
\begin{aligned}
&{(w^{k})^{T}}|\tilde{x}^{k+1}|-(\xi^{k})^{T}\tilde{x}^{k+1}\\
&={(w^{k})^{T}}|\tilde{x}^{k+1}|-(\xi^{k})^{T}\tilde{x}^{k+1}+0.5\tilde{\lambda}_{k}(\|A_{k}\tilde{x}^{k+1}-b^{k}-\tilde{v}^{k+1}\|^{2}-\sigma_{k})\\
&\overset{\rm (a)}\leq{{(w^{k})^{T}}|\hat{x}^{k}|-(\xi^{k})^{T}\hat{x}^{k}+0.5\tilde{\lambda}_{k}(\|A_{k}\hat{x}^{k}-b^{k}-\tilde{v}^{k+1}\|^{2}-\sigma_{k})}-0.5\tilde{\lambda}_{k}\|A_{k}\tilde{x}^{k+1}-A_{k}\hat{x}^{k}\|^{2}\\
&\overset{\rm (b)}\leq{{(w^{k})^{T}}|\hat{x}^{k}|-(\xi^{k})^{T}\hat{x}^{k}}-0.5\tilde{\lambda}_{k}\|A_{k}\tilde{x}^{k+1}-A_{k}\hat{x}^{k}\|^{2},
\end{aligned}
\end{equation}
where (a) follows from \eqref{relation00} and convexity, and (b) follows from \eqref{eq:2211210} and the fact that $\tilde\lambda_k \ge 0$.
Now, using the concavity of $\psi$ and recalling the definition of $\{w^k\}$ in $\IR$, we have
\begin{equation*}
\begin{aligned}
&\Psi{(|x^{k+1}|)}\leq{\Psi(|x^{k}|)+(w^{k})^{T}(|x^{k+1}|-|x^{k}|)}\\
&={\Psi(|x^{k}|)+(w^{k})^{T}(|x^{k+1}|-|\tilde{x}^{k+1}|)+(w^{k})^{T}(|\tilde{x}^{k+1}|-|\hat{x}^{k}|)+(w^{k})^{T}(|\hat{x}^{k}|-|x^{k}|)}\\
&\leq\Psi(|x^{k}|)+(w^{k})^{T}(|x^{k+1}|-|\tilde{x}^{k+1}|)+(\xi^{k})^{T}(\tilde{x}^{k+1}-\hat{x}^{k})-0.5\tilde{\lambda}_{k}\|A_{k}\tilde{x}^{k+1}-A_{k}\hat{x}^{k}\|^{2}\\
& \ \ \ +(w^{k})^{T}(|\hat{x}^{k}|-|x^{k}|),
\end{aligned}
\end{equation*}
where the last inequality follows from \eqref{relation}.
Upon rearranging terms in the above inequality, we obtain further that
\begin{equation}\label{eq:22196}
\begin{aligned}
0.5\tilde{\lambda}_{k}\|A_{k}\tilde{x}^{k+1}-A_{k}\hat{x}^{k}\|^{2}
&\leq\Psi(|x^{k}|)-\Psi{(|x^{k+1}|)}+(w^{k})^{T}(|x^{k+1}|-|\tilde{x}^{k+1}|)\\
&\ \ \ +(\xi^{k})^{T}(\tilde{x}^{k+1}-\hat{x}^{k})+(w^{k})^{T}(|\hat{x}^{k}|-|x^{k}|).
\end{aligned}
\end{equation}
Recall from Proposition~\ref{prop:221121}(i) that $\{\Psi(|x^{k}|)\}$ is convergent. In addition, we have from the boundedness of $\{\hat x^{k}\}$ and $\{\tilde{x}^{k+1}\}$ (see \eqref{thatx} and Proposition~\ref{prop:221121}(i)) and the fact $\lim_{k\to\infty}\xi^k = 0$ (see \eqref{eq:22180} and note that $\epsilon_k\downarrow 0$) that
$\lim_{k\rightarrow{\infty}}(\xi^{k})^{T}(\tilde{x}^{k+1}-\hat{x}^{k})=0$. Thus, passing to the limit as $k\to \infty$ on both sides of \eqref{eq:22196} and noting the boundedness of $\{w^k\}$ (see Remark~\ref{rem:221211}), Proposition~\ref{prop:221121}(ii), item (i) and \eqref{eq:22185}, we conclude that
\[
\lim_{k\rightarrow{\infty}}\|A_{k}\tilde{x}^{k+1}-A_{k}\hat{x}^{k}\|^2=0.
\]
Invoking \eqref{eq:22185} again and noting that $\{A_k\}$ is bounded (see Remark~\ref{rem:221211}), we deduce further that
\begin{equation*}
\lim_{k\rightarrow{\infty}}\|A_{k}\tilde{x}^{k+1}-A_{k}{x}^{k}\|^2=0.
\end{equation*}
This together with the definition of $A_k$ further implies
\[
\lim_{k\rightarrow{\infty}}{\sqrt{{\phi'_{+}((b_{i}-a_{i}^{T}x^{k})^{2})}}(a_{i}^{T}(\tilde{x}^{k+1}-{x}^{k}))}=0 \ \ \ \forall i.
\]
Finally, this relation together with the boundedness of $\{x^k\}$ and the continuity of $\phi'_+$ (and hence the boundedness of $\{\phi'_+((b_i - a_i^Tx^{k})^2)\}$) implies that
\[
\lim_{k\to \infty} \phi'_+((b_i - a_i^Tx^{k})^2)(a_i^T(\tilde x^{k+1} - x^k)) = 0  \ \ \ \forall i.
\]

\rm{(iii)}: Let $x^{*}$ be an arbitrary accumulation point of $\{x^{k}\}$ and let $\{x^{k_j}\}$ be a convergent subsequence such that $\lim_{j\rightarrow{\infty}}x^{k_{j}}=x^{*}$. Since $\{\tilde x^k\}$ is bounded in view of Proposition~\ref{prop:221121}(i), by passing to a further subsequence if necessary, we assume without loss of generality that $\lim_{j\rightarrow{\infty}}\tilde{x}^{k_{j}+1}=\bar{x}$ for some $\bar x$.

Then, for each $i$ satisfying
$\lim_{j\rightarrow{\infty}}{{\phi'_{+}((b_{i}-a_{i}^{T}x^{k_{j}})^{2})}}={\phi'_{+}((b_{i}-a_{i}^{T}x^{*})^{2})}>0$, we have from item (ii) that
\begin{equation}\label{eq:221193}
a_{i}^{T}(\bar{x}-{x}^{*})=\lim_{j\rightarrow{\infty}}{a_{i}^{T}(\tilde{x}^{k_{j}+1}-{x}^{k_{j}})}=0.
\end{equation}
Next, noting item (i), we assume without loss of generality that $\tilde{\lambda}_{k_{j}}>0$ for all sufficiently large $j$. Using this and \eqref{relation01}, we deduce further that for all large $j$,
\begin{align*}
0& =\|A_{k_{j}}\tilde{x}^{k_{j}+1}-b^{k_{j}}-\tilde{v}^{k_{j}+1}\|^{2}-\sigma_{k_{j}}\\
&=\|A_{k_{j}}\tilde{x}^{k_{j}+1}-b^{k_{j}}\|^{2}-2(\tilde{v}^{k_{j}+1})^T(A_{k_{j}}\tilde{x}^{k_{j}+1}-b^{k_{j}})+\|\tilde{v}^{k_{j}+1}\|^{2}-\sigma_{k_{j}}.
\end{align*}
Invoking the definition of $\sigma_k$, $A_k$ and $b^k$, we can rewrite the above relation as
\begin{align*}
0& = \Phi(y^{k_j}\circ y^{k_j})+\sum_{i=1}^{m}\{\phi'_{+}((b_{i}-a_{i}^{T}x^{k_{j}})^{2})[(b_{i}-a_{i}^{T}\tilde{x}^{k_{j}+1})^{2}-(b_{i}-a_{i}^{T}x^{k_{j}})^{2}]\}-\sigma\\
&\ \ \ -2(\tilde{v}^{k_{j}+1})^T(A_{k_{j}}\tilde{x}^{k_{j}+1}-b^{k_{j}})+\|\tilde{v}^{k_{j}+1}\|^{2}.
\end{align*}
Passing to the limit as $j\rightarrow{\infty}$ in the above relation, we have upon noting $\lim_{j\rightarrow{\infty}}{\|\tilde{v}^{k_{j}+1}\|^{2}}=0$ (see \eqref{eq:221214}) and $\lim_{j\rightarrow\infty}(\tilde{v}^{k_{j}+1})^T(A_{k_{j}}\tilde{x}^{k_{j}+1}-b^{k_{j}})=0$ (thanks to Remark~\ref{rem:221211}) that
\begin{equation}\label{eq:221194}
0=\Phi(y^*\circ y^*)+\sum_{i=1}^{m}\{\phi'_{+}((b_{i}-a_{i}^{T}x^{*})^{2})[(b_{i}-a_{i}^{T}\bar{x})^{2}-(b_{i}-a_{i}^{T}x^{*})^{2}]\}-\sigma,
\end{equation}
where $y^* := b - Ax^*$.
Now, if $\phi'_{+}((b_{i}-a_{i}^{T}x^{*})^{2})=0$, then $\phi'_{+}((b_{i}-a_{i}^{T}x^{*})^{2})[(b_{i}-a_{i}^{T}\bar{x})^{2}-(b_{i}-a_{i}^{T}x^{*})^{2}]=0$ holds. Otherwise, if
$\phi'_{+}((b_{i}-a_{i}^{T}x^{*})^{2})>0$, then we still have $\phi'_{+}((b_{i}-a_{i}^{T}x^{*})^{2})[(b_{i}-a_{i}^{T}\bar{x})^{2}-(b_{i}-a_{i}^{T}x^{*})^{2}]=0$ from \eqref{eq:221193}.
Thus, we deduce further from \eqref{eq:221194} that $\Phi(y^*\circ y^*) = \sigma$ as desired.

\rm{(iv)}: Suppose to the contrary that $\{\tilde{\lambda}_{k}\}$ is unbounded. Then there exists a subsequence such that $\tilde\lambda_{k_j}> 0$ for all $j$ and $\lim_{j\rightarrow{\infty}}\tilde{\lambda}_{k_{j}}=\infty$. Since the sequences $\{x^k\}$ and $\{\tilde x^k\}$ are bounded (thanks to Proposition~\ref{prop:221121}(i)), by passing to further subsequences if necessary, we assume without loss of generality that $\lim_{j\rightarrow{\infty}}{x^{k_{j}}}=x^{*}$ and $\lim_{j\rightarrow{\infty}}{\tilde{x}^{k_{j}+1}}=\bar{x}$ for some $x^*$ and $\bar x\in \R^n$.

Dividing both sides of the inclusion in \eqref{eq:22180} by $\{\tilde{\lambda}_{k_j}\}$, we see that
\[
\frac{\xi^{k_j}}{\tilde{\lambda}_{k_j}}\in{\frac{w^{k_j}\circ{\partial{\|\tilde{x}^{k_j+1}\|_1}}}{\tilde{\lambda}_{k_j}}}+A_{k_j}^{T}{\tilde{u}^{k_j+1}}.
\]
Note that $\|\xi^{k}\|\leq{\epsilon_{k}}$, $\{w^k\}$ is bounded in view of Remark \ref{rem:221211}, and the subdifferential of the $\ell_1$ norm is contained in the unit $\ell_\infty$ norm ball. Since $\lim_{j\rightarrow{\infty}}\tilde{\lambda}_{k_{j}}=\infty$, we have upon passing to the limit on both sides of the above display that $\lim_{j\to \infty}A^T_{k_j}\tilde u^{k_j+1} = 0$. We can further rewrite this limit as
\begin{equation}\label{eq:22192}
\lim_{j\rightarrow{\infty}}{\sum_{i=1}^{m}\phi'_{+}((b_{i}-a_{i}^{T}x^{k_{j}})^{2})(b_{i}-a_{i}^{T}\tilde{x}^{k_{j}+1}){a_{i}}}+
A_{k_{j}}^{T}\tilde{v}^{k_{j}+1}=0,
\end{equation}
where $\tilde v^{k+1}$ is defined as in \eqref{eq:221190}.
Now, notice from \eqref{eq:221214} and the boundedness of the sequence $\{A_{k_{j}}^{T}\}$ (thanks to Remark \ref{rem:221211}) that
$\lim_{j\rightarrow{\infty}}A_{k_{j}}^{T}\tilde{v}^{k_{j}+1}=0$.
This together with \eqref{eq:22192} gives
\begin{equation}\label{eq:22198}
\lim_{j\rightarrow{\infty}}{\sum_{i=1}^{m}\phi'_{+}((b_{i}-a_{i}^{T}x^{k_{j}})^{2})(b_{i}-a_{i}^{T}\tilde{x}^{k_{j}+1}){a_{i}}}=0.
\end{equation}
Next, observe from item (ii) that
\[
\begin{aligned}
& \lim_{j\to \infty}\phi'_+((b_i - a_i^Tx^{k_j})^2)(b_i - a_i^T\tilde x^{k_j+1})\\
  & = \lim_{j\to \infty}\phi'_+((b_i - a_i^Tx^{k_j})^2)(b_i - a_i^Tx^{k_j}) = \phi'_{+}((b_{i}-a_{i}^{T}x^{*})^{2})(b_{i}-a_{i}^{T}{x}^{*}).
\end{aligned}
\]
Combining the above display with \eqref{eq:22198}, we obtain
\begin{equation*}
{\sum_{i=1}^{m}\phi'_{+}((b_{i}-a_{i}^{T}x^{*})^{2})(b_{i}-a_{i}^{T}{x}^{*}){a_{i}}}=0.
\end{equation*}
This leads to a contradiction because Proposition~\ref{pro:17} asserts that the MFCQ holds for \eqref{eq:0.1} and we have from item (iii) that $\Phi((b - Ax^*)\circ (b - Ax^*)) = \sigma$. Therefore, the sequence $\{\tilde{\lambda}_{k}\}$ is bounded.

{\rm (v)}: Let $x^{*}$ be an arbitrary accumulation point of $\{x^{k}\}$ and let $\{x^{k_j}\}$ be a subsequence such that $\lim_{j\rightarrow{\infty}}x^{k_{j}+1}=x^{*}$. Then we see from Proposition~\ref{prop:221121}(ii) that
\begin{equation}\label{x-limit}
\lim_{j\to\infty}\tilde x^{k_j+1} = x^*.
\end{equation}
Moreover, since $\{x^k\}$ is bounded in view of Proposition~\ref{prop:221121}(i), by passing to a further subsequence if necessary, we assume that
\begin{equation}\label{x-limit2}
\lim_{j\to \infty}x^{k_j} = \bar x
\end{equation}
for some $\bar x\in \R^n$. Then both $x^*$ and $\bar x$ are accumulation points of $\{x^k\}$. Furthermore, since $\{\tilde \lambda_k\}$ is a nonnegative sequence and is bounded in view of item (iv), by passing to a further subsequence if necessary, we assume without loss of generality that $\lim_{j\rightarrow{\infty}}\tilde \lambda_{k_j}=\lambda_*$ for some $\lambda_*\ge 0$.
Then we see immediately from item (iii) that
\[
\Phi((b-Ax^*)\circ(b-Ax^*))\leq\sigma~\ \ \mbox{and}\ \ ~\lambda_*(\Phi((b-Ax^*)\circ(b-Ax^*))-\sigma)=0,
\]
which means that both \eqref{eq:25} and \eqref{eq:26} hold with $(x^{*},\lambda_*)$ (and hence $(x^*,\lambda_*/2)$) in place of $(x,\lambda)$.
Thus, to complete the proof, it remains to show that $(x^{*},\lambda_*/2)$ satisfies \eqref{eq:27} in place of $(x,\lambda)$.
To this end, we first observe from item (ii) that
\begin{equation*}
\begin{aligned}
\lim_{j\to \infty}\phi'_+((b_i - a_i^Tx^{k_j})^2)(b_i - a_i^T\tilde x^{k_j+1})= \lim_{j\to \infty}\phi'_+((b_i - a_i^Tx^{k_j})^2)(b_i - a_i^Tx^{k_j})\ \ \ \forall i.
\end{aligned}
\end{equation*}
Let $I := \{i:\; \phi'_+((b_i - a_i^T\bar x)^2) > 0\}$.
If $i\in I$, we deduce immediately from the above display that $a_i^Tx^* = a_i^T\bar x$. On the other hand, for those $i\notin I$, we claim that $\phi'_+((b_i - a_i^Tx^*)^2) = 0$. Suppose to the contrary that $\phi'_+((b_{i_0} - a_{i_0}^Tx^*)^2) > 0$ for some $i_0\notin I$. Then by the concavity of $\phi$, we know that $(b_i - a_i^T\bar x)^2$ is a maximizer of $\phi$ for any $i\notin I$ and hence
\[
\phi((b_{i_0} - a_{i_0}^Tx^*)^2) < \phi((b_{i_0} - a_{i_0}^T\bar x)^2) \ \ \ {\rm and}\ \ \ \phi((b_i - a_i^Tx^*)^2) \le \phi((b_i - a_i^T\bar x)^2)\ \ \forall i\notin I\cup\{i_0\}.
\]
Also, recall that $\phi((b_i - a_i^Tx^*)^2) = \phi((b_i - a_i^T\bar x)^2)$ for all $i\in I$. Thus, we have
\begin{eqnarray*}
&& \Phi((b - Ax^*)\circ (b - Ax^*)) = \sum_{i=1}^m \phi((b_i-a^T_ix^*)^2) \\
&& = \phi((b_{i_0} - a_{i_0}^Tx^*)^2) + \sum_{i\neq i_0, i\notin I} \phi((b_i-a^T_ix^*)^2) + \sum_{i\in I} \phi((b_i-a^T_ix^*)^2)\\
&& = \phi((b_{i_0} - a_{i_0}^Tx^*)^2) + \sum_{i\neq i_0, i\notin I} \phi((b_i-a^T_ix^*)^2) + \sum_{i\in I} \phi((b_i-a^T_i\bar x)^2)\\
&& \le \phi((b_{i_0} - a_{i_0}^Tx^*)^2) + \sum_{i\neq i_0, i\notin I} \phi((b_i-a^T_i\bar x)^2) + \sum_{i\in I} \phi((b_i-a^T_i\bar x)^2)\\
&& < \phi((b_{i_0} - a_{i_0}^T\bar x)^2) + \sum_{i\neq i_0} \phi((b_i-a^T_i\bar x)^2) = \Phi((b - A\bar x)\circ (b - A\bar x)).
\end{eqnarray*}
Since both $x^*$ and $\bar x$ are accumulation points of $\{x^k\}$, the above display contradicts item (iii). Thus, we have shown that,
\begin{equation}\label{keyrelationship}
  \begin{cases}
    a^T_ix^* = a^T_i\bar x & {\rm if}\ \phi'_+((b_i - a_i^T\bar x)^2) > 0,\\
    \phi'_+((b_i - a_i^T\bar x)^2) = \phi'_+((b_i - a_i^Tx^*)^2)=0 & {\rm otherwise}.
  \end{cases}
\end{equation}
Also, with $\tilde v^{k+1}$ defined as in \eqref{eq:221190}, one can notice from \eqref{eq:221214} and the boundedness of the sequence $\{A_{k_{j}}^{T}\}$ (thanks to Remark \ref{rem:221211}) that
\begin{equation}\label{haha5}
\lim_{j\rightarrow{\infty}}A_{k_{j}}^{T}\tilde{v}^{k_{j}+1}=0.
\end{equation}
Finally, we deduce from \eqref{eq:22180}, \eqref{eq:221190}, and the definitions of $\{w^k\}$ and $\{A_k\}$ in Algorithm~\ref{algorithm 1} that
\begin{equation*}
\begin{aligned}
&\xi^{k_{j}}\in{w^{k_{j}}\circ\partial{\|\tilde{x}^{k_{j}+1}\|_1}+\tilde{\lambda}_{k_{j}}A_{k_{j}}^{T}(A_{k_{j}}\tilde{x}^{k_{j}+1}-b^{k_{j}}-\tilde{v}^{k_{j}+1})}\\
&=\Psi'_{+}(|x^{k_{j}}|)\circ\partial{\|\tilde{x}^{k_{j}+1}\|_1}-\tilde{\lambda}_{k_{j}}
\left({\sum_{i=1}^{m}\phi'_{+}((b_{i}-a_{i}^{T}x^{k_{j}})^{2})(b_{i}-a_{i}^{T}\tilde{x}^{k_{j}+1}){a_{i}}}+
A_{k_{j}}^{T}\tilde{v}^{k_{j}+1}\right)\\
&=\Psi'_{+}(|\tilde x^{k_{j}+1}|+\delta_{k_j})\circ\partial{\|\tilde{x}^{k_{j}+1}\|_1}-\tilde{\lambda}_{k_{j}}
\left({\sum_{i=1}^{m}\phi'_{+}((b_{i}-a_{i}^{T}x^{k_{j}})^{2})(b_{i}-a_{i}^{T}\tilde{x}^{k_{j}+1}){a_{i}}}+
A_{k_{j}}^{T}\tilde{v}^{k_{j}+1}\right),
\end{aligned}
\end{equation*}
where $\delta_{k_j} := |x^{k_{j}}| - |\tilde x^{k_{j}+1}|$, with $\delta_{k_j}\to 0$ as $j\to \infty$ in view of Proposition~\ref{prop:221121}(iii).
Passing to the limit as $j\to \infty$ in the above display and noting \eqref{x-limit}, \eqref{x-limit2}, $\tilde\lambda_{k_j}\to \lambda_*$, \eqref{keyrelationship}, \eqref{haha5} and the closedness of subdifferential,
we conclude that \eqref{eq:27} holds with $(x^{*},\lambda_*/2)$ in place of $(x,\lambda)$. This completes the proof.
\end{proof}

\section{Subproblem solvers and termination criteria}\label{sec4}

Recall from Remark \ref{rem3.1} that the feasible set of \eqref{subproblem2} is nonempty and the Slater condition holds. Moreover, it holds that $w^k_i > 0$ for each $k$ and $i$ in view of Assumption~\ref{ass:1.1}. Thus, the optimal solution set of the subproblem \eqref{subproblem2} is nonempty for each $k$, and we have in view of \cite[Corollary 28.2.1, Theorem 28.3]{convexana1970} that a Lagrange multiplier exists for this problem. Consequently, the subproblem \eqref{subproblem2} can be approximately solved by a wide range of first-order methods; see, for example, \cite{BCG11,Beck17,Nes2006} and references therein.

In this section, we will discuss two specific methods for solving subproblem \eqref{subproblem2} for each $k$, namely the alternating direction method of multipliers (ADMM) and SPGL1. Moreover, we will investigate how a tuple $(\tilde x^{k+1},\tilde{u}^{k+1})$ satisfying the inexact criteria \eqref{eq:0077742}, \eqref{eq:0077842} and \eqref{eq:0077831} can be found by the subproblem solvers.

For notational convenience, for the rest of this section, we fix any $k \ge 0$ and define
\begin{equation}\label{Abarbbar}
  \bar{A}:=A_k,\ \bar{b}:=b^k,\ \bar{\sigma}:=\sqrt{\sigma_k},\ \bar v := v^k,\ \bar{w}:=w^k
\end{equation}
so that we can rewrite the corresponding subproblem \eqref{subproblem2} simply as
\begin{equation}\label{eq:22325}
  \begin{array}{rl}
    \min\limits_{x\in{{\mathbb{R}}^{n}}} & \|\bar{w}\circ{x}\|_{1}\\
    {\rm s.t.} & \|\bar{b}-\bar{A}x\|\leq{\bar{\sigma}}.
  \end{array}
\end{equation}

\subsection{Alternating direction method of multipliers}\label{sec4.1}

Alternating direction method of multipliers (ADMM) is a classical method for minimizing the sum of two proper closed convex functions whose variables are coupled only by a linear constraint; see \cite{BPCPE10,Fazel2013} and references therein for more discussions. This method is also the basis for the solver YALL1 \cite{YangZhang11} for problems of the form \eqref{eq:22325}. A basic form of ADMM applied to solving \eqref{eq:22325} can be described as follows: Initialize at some suitable $(\bar x^0,\bar u^0,\bar \lambda^0)$ and compute for $l = 0,1,2,\ldots$,
\begin{subnumcases}{\label{eq:221300}}
\bar x^{l+1}\in {\Argmin_{x\in \R^n}\left\{L_{\beta}(x,\bar u^l;\bar \lambda^l)+{\frac{1}{2}}(x - \bar x^l)^T(\tilde{\lambda}{I}-\beta{\bar{A}^{T}\bar{A}})(x - \bar x^l)\right\}},\label{eq:221277}\\
\bar u^{l+1}\in {\Argmin_{u\in \R^m}}\left\{L_{\beta}(\bar x^{l+1},u;\bar \lambda^l)\right\},\label{eq:221278}\\
\bar \lambda^{l+1}=\bar \lambda^{l}-\gamma\beta(\bar{A}\bar x^{l+1}-\bar{b}-\bar u^{l+1}),\label{eq:22540}
\end{subnumcases}
where $\beta>0$, $\gamma\in(0,\frac{1+\sqrt{5}}{2})$, $\tilde \lambda > 0$ satisfies $\tilde{\lambda}I-\beta{\bar{A}^{T}\bar{A}}\succeq 0$, and $L_{\beta}$ is the augmented Lagrangian function defined by
\[
L_{\beta}(x,u;\lambda)=\|\bar{w}\circ{x}\|_{1}+\delta_{\|\cdot\|\leq{\bar{\sigma}}}(u)- \lambda^T(\bar{A}x-\bar{b}-u)+\frac{\beta}{2}\|\bar{A}x-\bar{b}-u\|^{2}.
\]
Notice that the $\bar x$-update and $\bar u$-update in \eqref{eq:221277} and \eqref{eq:221278} have closed form solutions in terms of the soft-thresholding operator \cite{Teboulle2009} and the projection onto the Euclidean norm ball of radius $\bar \sigma > 0$, respectively. Moreover, in view of the positivity of $\bar w_i$ as guaranteed by Assumption~\ref{ass:1.1} and the Slater's condition discussed in Remark~\ref{rem3.1}, we deduce from \cite[Theorem~B.1]{Fazel2013} that the sequence $\{\bar x^l,\bar u^l,\bar \lambda^l\}$ is well-defined, $\{\bar x^l\}$ converges to an optimal solution $\bar{x}^*$ of \eqref{eq:22325}, and furthermore,
\begin{equation}\label{eq:283}
\|\bar \lambda^l-\bar \lambda^{l+1}\|\rightarrow{0}, \|\bar x^{l+1}-\bar x^l\|\rightarrow{0}, \|\bar u^{l+1}-\bar u^l\|\rightarrow{0}~\mbox{and}~\|\bar{w}\circ{\bar x^l}\|_{1}\rightarrow{\|\bar{w}\circ{\bar{x}^*}\|_{1}} \ \ {\rm as}\ \ l\to \infty.
\end{equation}

We now argue that the criteria \eqref{eq:0077742}, \eqref{eq:0077842} and \eqref{eq:0077831} can be achieved with $(\tilde x^{k+1},\tilde u^{k+1}) = (\bar x^{l+1},\bar u^{l+1})$ for some large enough $l$ (depending on $k$). To this end, observe from the optimality conditions of the minimization problems \eqref{eq:221277} and \eqref{eq:221278} that
\begin{numcases}{}
0\in{\bar{w}\circ{\partial\|\bar x^{l+1}\|_1}}-\bar{A}^{T}\bar \lambda^l+\beta{\bar{A}^{T}}(\bar{A}\bar x^{l+1}-\bar{b}-\bar u^l)+(\tilde{\lambda}{I}-\beta{\bar{A}^{T}\bar{A}})(\bar x^{l+1}-\bar x^l),\nonumber\\
0\in{N_{\|\cdot\|\leq{\bar{\sigma}}}(\bar u^{l+1})+\bar \lambda^l-\beta{(\bar{A}\bar x^{l+1}-\bar{b}-\bar u^{l+1})}},\nonumber\\
\bar \lambda^{l+1}-\bar \lambda^l=-\gamma\beta{(\bar{A}\bar x^{l+1}-\bar{b}-\bar u^{l+1})}.\nonumber
\end{numcases}
This implies that
\begin{numcases}{}
-{\beta}\bar{A}^{T}(\bar u^{l+1}-\bar u^l)-(\tilde{\lambda}{I}-\beta{\bar{A}^{T}\bar{A}})(\bar x^{l+1}-\bar x^l)\in{\bar{w}\circ{\partial\|\bar x^{l+1}\|_1}+\bar{A}^{T}N_{\|\cdot\|\leq{\bar{\sigma}}}(\bar u^{l+1})},\label{eq:282}\\
\bar \lambda^{l+1}-\bar \lambda^l=-\gamma\beta{(\bar{A}\bar x^{l+1}-\bar{b}-\bar u^{l+1})}.\label{eq:281}
\end{numcases}
Combining \eqref{eq:283}, \eqref{eq:282} and \eqref{eq:281} and recalling that $\epsilon_k > 0$, we deduce that for all sufficiently large $l$,
\begin{align}
&\mathrm{dist}(0,\bar{w}\circ{\partial{\|\bar x^{l+1}\|}}+\bar{A}^{T}N_{{\|\cdot\|}\leq{\bar{\sigma}}}(\bar u^{l+1}))\nonumber\\
&\leq{\|-{\beta}\bar{A}^{T}(\bar u^{l+1}-\bar u^l)-(\tilde{\lambda}{I}-\beta{\bar{A}^{T}\bar{A}})(\bar x^{l+1}-\bar x^l)\|}\leq{\epsilon_{k}}\label{eq:225161}
\end{align}
and
\begin{equation}\label{eq:225162}
\|\bar{A}\bar x^{l+1}-\bar{b}-\bar u^{l+1}\|=(\gamma\beta)^{-1}\|\bar \lambda^l-\bar \lambda^{l+1}\|\leq{\epsilon_{k}},
\end{equation}
which means that the pair $(\bar x^{l+1},\bar u^{l+1})$ satisfies the criteria \eqref{eq:0077742} and \eqref{eq:0077842} (in place of $(\tilde x^{k+1},\tilde u^{k+1})$) for all sufficiently large $l$.
As for criterion \eqref{eq:0077831}, note that $\mu_k > 0$ and we have $\lim_{l\rightarrow{\infty}}\|\bar{w}\circ{P_{k}(\bar x^{l+1})}\|_{1}={\|\bar{w}\circ{\bar{x}^*}\|_{1}}$ in view of \eqref{eq:283}, the fact that $\{\bar x^l\}$ converges to an optimal solution $\bar{x}^*$ of \eqref{eq:22325}, and the definition of $P_k$ in \eqref{eq:225164}. Thus, we also have for all large $l$ that
\[
\|\bar{w}\circ{P_{k}(\bar x^{l+1})}\|_{1} < {\|\bar{w}\circ{\bar{x}^*}\|_{1}} + \mu_k \le {\|\bar{w}\circ x^k\|_{1}} + \mu_k,
\]
where the second inequality holds
because $x^k$ is feasible for \eqref{eq:22325} and $\bar x^*$ is an optimal solution of \eqref{eq:22325}. Hence, the criterion \eqref{eq:0077831} is also satisfied by $(\bar x^{l+1},\bar u^{l+1})$  (in place of $(\tilde x^{k+1},\tilde u^{k+1})$) for all large enough $l$.

\subsection{Spectral projected-gradient algorithm for $L_1$ minimization}\label{sec4.2}

SPGL1 \cite{Michael2008} is a standard solver for problems of the form \eqref{eq:22325}. The basic idea behind SPGL1 is a root-finding procedure for the following nonsmooth equation
\begin{equation}\label{varphi}
\varphi(\tau) = \bar\sigma,\ \ \ \ {\rm where}\ \varphi(\tau):=\displaystyle \inf_{\|\bar{w}\circ{x}\|_1\leq{\tau}} \|\bar{A}x-\bar{b}\|.
\end{equation}
Notice that $\|\bar b\| > \bar \sigma$ (because $0$ is not feasible for \eqref{eq:0.1} and hence is also not feasible for \eqref{eq:22325}), $\bar \sigma > 0$ (see Lemma~\ref{lem:prep}) and $\varphi(\|\bar w\circ (A^\dagger b)\|_1) = 0$.
One can seen from \cite[Section 2]{Michael2008} that the optimal {\em value} of \eqref{eq:22325}, denoted by $\tau_{\bar{\sigma}}$, is the {\em unique} solution of the above equation. In each iteration of the SPGL1, the function $\varphi$ and its gradient are approximately evaluated at the currently available $\tau$ value, and the Newton's method (with inexact gradient and function value) is applied to solving the above equation. Specifically, starting with a $\tau_0 = 0$, for each $l = 0,1,2,\ldots$, one computes
\begin{equation}
\tau_{l+1}=\tau_{l}+\frac{\bar{\sigma}-\tilde{\varphi}_l(\tau_{l})}{\tilde{\varphi}_l'(\tau_{l})},\label{eq:223301}
\end{equation}
where $\tilde{\varphi}_l(\tau_{l})$ and $\tilde{\varphi}_l'(\tau_{l})$ are approximations of $\varphi(\tau_{l})$ and ${\varphi}'(\tau_{l})$, respectively, that are constructed to satisfy technical assumptions such as the affine minorant oracles in \cite[Algorithm 2.2]{Michael2019} (see also \cite[Section~3.1]{Michael2008}); specifically, their constructions are based on a ``sufficiently accurate" approximate solution of the following optimization problem
\begin{equation}\label{eq:223243}
  \begin{array}{rl}
\min\limits_{\|\bar{w}\circ{x}\|_1\leq{\tau_{l}}}\|\bar{b}-\bar{A}x\|,
  \end{array}
\end{equation}
which is solved approximately by the spectral projected-gradient (SPG) method described in \cite[Algorithm 1]{Michael2008}.
From \cite[Theorem 2.3]{Michael2019} (see also \cite[Section~3]{Michael2008}) and recalling $\|\bar b\|> \bar \sigma$, we know that the sequence $\{\tau_l\}$ generated from \eqref{eq:223301} converges to $\tau_{\bar{\sigma}}$, the unique solution of \eqref{varphi}.
Moreover, since $\bar{\sigma}>0$ in view of Lemma \ref{lem:prep}, we can deduce from \cite[Theorem 2.1]{Michael2008} (see also the discussion at the beginning of \cite[Section~3]{Michael2008}) that
\begin{equation}\label{varphi0}
\lim_{l\rightarrow{\infty}}\varphi(\tau_l)=\bar\sigma\ \ {\rm and}\ \ \lim_{l\rightarrow{\infty}}\tau_l=\tau_{\bar{\sigma}}<\tau_{BP},
\end{equation}
where $\tau_{BP} > 0$ is the smallest positive root of $\varphi$. Furthermore, we also have $\tau_{\bar \sigma} > 0$ because $\varphi(0) = \|\bar b\| > \bar \sigma$. The above observations show that there exists $l_0 > 0$ such that
\begin{equation}\label{varphi2}
\varphi(\tau_l) > 0\ {\rm and} \ \tau_l > 0\ \ \forall l\ge l_0.
\end{equation}

We next argue that the criteria \eqref{eq:0077742}, \eqref{eq:0077842} and \eqref{eq:0077831} can be achieved when $l$ is sufficiently large
based on sufficiently accurate solutions of \eqref{eq:223243}; note that we may need more accurate solutions than those used for constructing $\tilde\varphi_l$ and $\tilde\varphi_l'$ (and hence $\tau_{l+1}$) above.
To this end, we first recall some facts concerning the SPG method used for solving \eqref{eq:223243}. Recall from
\cite[Algorithm 1]{Michael2008} that the SPG method can be described as follows: For each $l$, we initialize $\bar x^{l,0}$ suitably and compute, for $t=0,1,2,\ldots$,
\begin{equation}\label{eq:2233033}
\bar x^{l,t+1}\in\Argmin_{\|\bar{w}\circ{x}\|_{1}\leq{\tau_{l}}}\frac12\|x-(\bar x^{l,t}+\alpha_{l,t}{\bar{A}^T}(\bar{b}-\bar{A}\bar x^{l,t}))\|^2,
\end{equation}
where $\alpha_{l,t} \in (0,\alpha_{\max}]$ is found by backtracking starting from ``uniformly positive" initial stepsizes to satisfy an Armijo-type nonmonotone linesearch condition, with $\alpha_{\max}$ being a fixed positive parameter. It is standard to show that $\inf_{t,l}\alpha_{l,t}\ge \alpha_{\min}$ for some positive constant $\alpha_{\min}$; see, for example, \cite[Lemma~5(b)]{TsengYun09}. Based on the existence of such a lower bound on $\{\alpha_{l,t}\}$, one can further show that for each $l$,
\begin{equation}\label{convergence}
\lim_{t\rightarrow{\infty}}\|\bar x^{l,t+1}-\bar x^{l,t}\|=0;
\end{equation}
see, for example, \cite[Lemma~3.8]{LuZhang12}. Moreover, one has from \cite[Theorem 2.4]{Bergin2000} that any accumulation point of $\{\bar x^{l,t}\}$ (as a sequence in $t$) is a global minimizer of \eqref{eq:223243}. In view of this and \eqref{varphi}, we have for each $l$,
\begin{equation}\label{convergence2}
\lim_{t\rightarrow{\infty}}\|\bar A \bar x^{l,t+1} - \bar b\| = \varphi(\tau_l).
\end{equation}

Now, from the first-order optimality condition of \eqref{eq:2233033}, we deduce for each $l\ge l_0$ (recall that $l_0$ is given in \eqref{varphi2}) and each $t\ge 0$ that
\begin{equation*}
0\in{\bar x^{l,t+1}-\bar x^{l,t}+\alpha_{l,t}{\bar{A}^T}(\bar{A}\bar x^{l,t}-\bar{b})+N_{C_l}(\bar x^{l,t+1})},
\end{equation*}
where $C_l := \{x:\; \|\bar{w}\circ{x}\|_{1}\leq{\tau_{l}}\}$.
Since $\alpha_{l,t}\ge \alpha_{\min} > 0$, we further have
\begin{equation*}
0\in{\frac{1}{\alpha_{l,t}}(\bar x^{l,t+1}-\bar x^{l,t})+{\bar{A}^T}(\bar{A}\bar x^{l,t}-\bar{b})+N_{C_l}(\bar x^{l,t+1})}.
\end{equation*}
Since $\tau_l > 0$ in view of \eqref{varphi2} and $l\ge l_0$, the above display together with Theorem~1.3.5 in \cite[Section~D]{HiLe01} further implies that there exists a $\lambda_{l,t}\geq{0}$ such that
\begin{equation}\label{lambda}
0\in{\frac{1}{\alpha_{l,t}}(\bar x^{l,t+1}-\bar x^{l,t})+{\bar{A}^T}(\bar{A}\bar x^{l,t}-\bar{b})+\lambda_{l,t}\bar{w}\circ{\partial\|\bar x^{l,t+1}\|_1}}\ \ {\rm and}\ \ \lambda_{l,t}(\|\bar w\circ\bar x^{l,t+1}\|_1-\tau_l)=0.
\end{equation}
Using the first relation in \eqref{lambda} and recalling that $\alpha_{l,t}\in [\alpha_{\min},\alpha_{\max}]$, we deduce that for all $t$,
\begin{equation*}
\begin{aligned}
&{\rm dist}(0,\lambda_{l,t}\bar{w}\circ{\partial\|\bar x^{l,t+1}\|_1} + \bar{A}^{T}(\bar{A}\bar x^{l,t+1} - \bar{b}))\\
&\le \|\alpha_{l,t}^{-1}(\bar x^{l,t}-\bar x^{l,t+1}) + {\bar{A}^T}\bar{A}(\bar x^{l,t+1}-\bar x^{l,t})\| \le \widehat M\|\bar x^{l,t+1} - \bar x^{l,t}\|,
\end{aligned}
\end{equation*}
where $\widehat M := \lambda_{\max}(\bar A^T\bar A) + \alpha_{\min}^{-1}$.
The above display together with \eqref{convergence} implies that for each $l\ge l_0$,
\begin{equation}\label{eq:223261}
\lim_{t\rightarrow{\infty}}{\rm dist}(0,\lambda_{l,t}\bar{w}\circ{\partial\|\bar x^{l,t+1}\|_1} + \bar{A}^{T}(\bar{A}\bar x^{l,t+1} - \bar{b}))=0.
\end{equation}

We are now ready to show that the criteria \eqref{eq:0077742}, \eqref{eq:0077842} and \eqref{eq:0077831} can be achieved.
\begin{itemize}
  \item {\bf On criterion \eqref{eq:0077742}}: Let $\bar u^{l,t+1}$ denote a projection of $\bar A\bar x^{l,t+1}-\bar b$ onto the sphere $\{x:\;\|x\| = \bar\sigma\}$. Then from \eqref{varphi0} and \eqref{convergence2}, we deduce that there exists $l_1$ such that for all $l\ge l_1$, there exists $s_{l}$ such that
\begin{equation*}
\|\bar A \bar x^{l,t+1} - \bar b - \bar u^{l,t+1}\| < \epsilon_k\ \ \ \ \ \forall t\ge s_{l}.
\end{equation*}
This shows that \eqref{eq:0077742} is satisfied by $(\bar x^{l,t+1},\bar u^{l,t+1})$ (in place of $(\tilde x^{k+1},\tilde u^{k+1})$) for all large $l$ and the correspondingly large $t$.
\item {\bf On criterion \eqref{eq:0077842}}: We start by noting from \eqref{varphi0}, \eqref{convergence2}, \eqref{eq:223261} and the definition of $u^{l,t+1}$ that
\begin{equation}\label{limit333}
  \lim_{l\to\infty}\lim_{t\rightarrow{\infty}}{\rm dist}(0,\lambda_{l,t}\bar{w}\circ{\partial\|\bar x^{l,t+1}\|_1} + \bar{A}^{T}\bar u^{l,t+1})=0.
\end{equation}
We next derive a {\em uniform} lower bound on the magnitude of $\{\lambda_{l,t}\}$ for all large $l$ and all correspondingly large $t$.
To this end, given any $l\ge l_0$, upon rearranging terms in the first relation in \eqref{lambda} and noting that $\partial\|\cdot\|_1$ is contained in the infinity norm ball of radius 1, we obtain
the following lower bound for $|\lambda_{l,t}|$ whenever $l\ge l_0$:
\begin{equation*}
|\lambda_{l,t}| \ge \frac{1}{\max_{1\le i\le n}\bar w_i}\left\|\frac{1}{\alpha_{l,t}}(\bar x^{l,t+1}-\bar x^{l,t})+{\bar{A}^T}(\bar{A}\bar x^{l,t}-\bar{b})\right\|_\infty.
\end{equation*}
Hence, in view of \eqref{convergence} and recalling that $\alpha_{l,t}\in [\alpha_{\min},\alpha_{\max}]$, we have
\begin{align*}
\Lambda:= \liminf_{l\to\infty}\liminf_{t\rightarrow{\infty}}|\lambda_{l,t}| &\ge \frac{1}{\max_{1\le i\le n}\bar w_i}\liminf_{l\to\infty}\liminf_{t\rightarrow{\infty}}\left\|{\bar{A}^T}(\bar{A}\bar x^{l,t}-\bar{b})\right\|_\infty\\
&\ge \frac{1}{n\max_{1\le i\le n}\bar w_i}\liminf_{l\to\infty}\liminf_{t\rightarrow{\infty}}\left\|{\bar{A}^T}(\bar{A}\bar x^{l,t}-\bar{b})\right\|\\
&\overset{\rm (a)}= \frac{1}{n\max_{1\le i\le n}\bar w_i}\liminf_{l\to\infty}\liminf_{t\rightarrow{\infty}}\left\|{A^T}{{\rm Diag}(\bar v)}(\bar{A}\bar x^{l,t}-\bar{b})\right\|\\
&\ge \frac{\sqrt{\lambda_{\min}(AA^T)}}{n\max_{1\le i\le n}\bar w_i}\liminf_{l\to\infty}\liminf_{t\rightarrow{\infty}}\left\|{{\rm Diag}(\bar v)}(\bar{A}\bar x^{l,t}-\bar{b})\right\|\\
&\overset{\rm (b)}\ge \frac{\sqrt{\lambda_{\min}(AA^T)}}{n\max_{1\le i\le n}\bar w_i}\cdot\min_{\bar v_i >0}\bar v_i\cdot\liminf_{l\to\infty}\liminf_{t\rightarrow{\infty}}\left\|\bar{A}\bar x^{l,t}-\bar{b}\right\| > 0,
\end{align*}
where (a) holds because $\bar A = {\rm Diag}(\bar v)A$ (see \eqref{Abarbbar} and Step 1 of $\IR$), (b) holds because $\bar A = {\rm Diag}(\bar v)A$ and $\bar b =  {\rm Diag}(\bar v)b$, and the last strict inequality follows from the facts that $A$ has full row rank (Assumption~\ref{ass:1.1}(iii)) and that $\lim_{l\to\infty}\lim_{t\rightarrow{\infty}}\|\bar A \bar x^{l,t} - \bar b\| = \bar\sigma > 0$, thanks to \eqref{varphi0}, \eqref{convergence} and \eqref{convergence2}. Thus, there exists $l_2\ge l_0$ such that for all $l\ge l_2$, there exists $\gamma_l$ such that whenever $t\ge \gamma_l$, we have
\begin{equation}\label{positivity}
\lambda_{l,t}\ge \Lambda/2 > 0.
\end{equation}
Equipped with this lower bound, we can now invoke \eqref{limit333} to deduce the existence of $l_3\ge l_2$ such that for all $l\ge l_3$, there exists $\zeta_l \ge \gamma_l$ such that whenever $t\ge \zeta_l$, we have
\[
{\rm dist}(0,\lambda_{l,t}\bar{w}\circ{\partial\|\bar x^{l,t+1}\|_1} + \bar{A}^{T}\bar u^{l,t+1}) \le \Lambda \epsilon_k/2 \le \lambda_{l,t}\epsilon_k.
\]
This shows that
\[
{\rm dist}(0,\bar{w}\circ{\partial\|\bar x^{l,t+1}\|_1} + \lambda_{l,t}^{-1}\bar{A}^{T}\bar u^{l,t+1}) \le \epsilon_k.
\]
The above display together with the normal cone formula in Theorem~1.3.5 of \cite[Section~D]{HiLe01} and the fact that $\|\bar u^{l,t+1}\| = \bar \sigma > 0$ shows that \eqref{eq:0077842} is satisfied by $(\bar x^{l,t+1},\bar u^{l,t+1})$ (in place of $(\tilde x^{k+1},\tilde u^{k+1})$) for all $l\ge l_3$ and $t\ge \zeta_l$.
\item {\bf On criterion \eqref{eq:0077831}}: We note from \eqref{positivity}, \eqref{varphi0} and \eqref{lambda} that
\[
\lim_{l\rightarrow{\infty}}\lim_{t\to\infty}\|\bar{w}\circ \bar x^{l,t+1}\|_{1}=\tau_{\bar \sigma}.
\]
Moreover, we see from \eqref{varphi0} and \eqref{convergence2} that
\[
\lim_{l\rightarrow{\infty}}\lim_{t\to\infty}\|\bar A\bar x^{l,t+1}-\bar b\| = \bar \sigma.
\]
Using the above two displays, the definition of $P_k$ in \eqref{eq:225164} and the positivity of $\mu_k$, we deduce that the criterion \eqref{eq:0077831} is also satisfied by $(\bar x^{l,t+1},\bar u^{l,t+1})$ (in place of $(\tilde x^{k+1},\tilde u^{k+1})$) for all sufficiently large $l$ and all correspondingly large $t$.
\end{itemize}

\begin{remark}
  For SPGL1, as discussed above, theoretically, a tuple satisfying the criteria \eqref{eq:0077742}, \eqref{eq:0077842} and \eqref{eq:0077831} exists. However, for each subproblem instance, it can be difficult to estimate the thresholds for $l$ and $t$, and other parameters of the solver might need to be carefully adjusted as well to obtain such a tuple efficiently. In our numerical experiments in the next section, we will simply treat SPGL1 as a black-box solver with its default parameter settings, and invoke the adjustment discussed in Remark~\ref{rem3.2}. We will give more details in the next section.
\end{remark}

\section{Numerical experiments}\label{sec5}

In this section, we perform numerical experiments to study the behavior of $\IR$. Specifically, we consider the following constrained optimization problem
\begin{equation}\label{eq:5.1}
  \begin{array}{rl}
    \min\limits_{x\in{{\mathbb{R}}^{n}}} &\sum_{i=1}^{n}\log(1+|x_i|/{\epsilon})\\
    {\rm s.t.} & {\sum_{i=1}^{m}\log(1+(b_i-a^T_ix)^2/{\delta}^2)\leq{\sigma}},\\
  \end{array}
\end{equation}
where $m\ll{n}$, $\epsilon>0$, $\delta>0$, $b\in \R^m$, $\sigma\in (0,\sum_{i=1}^{m}\log(1+b_i^2/{\delta^2}))$, and
$\{a_1,\ldots,a_m\}$ is linearly independent. The above model is a special case of \eqref{eq:0.1} with $A$ being a matrix with its $i$th-row being $a^T_i$ for all $i$, and $\psi$ and $\phi$ being the log-penalty and Cauchy loss functions, respectively; recall that these $\psi$ and $\phi$ satisfy Assumption~\ref{ass:1.1} as mentioned in Section~\ref{sec1}). Notice that Assumption~\ref{ass:0.2} also holds because $\bar \phi = \infty$ in this case. Then we see from Theorem~\ref{the3.1} that every accumulation point of the sequence $\{x^k\}$ generated by $\IR$ is a stationary point of \eqref{eq:5.1}.

As a benchmark, we also consider solving \eqref{eq:5.1} using the SCP$_{\rm ls}$ in \cite{YuLP21}. Specifically, we rewrite \eqref{eq:5.1} into the following form:
\begin{equation}\label{decomposition}
\min_{x\in{\R^n}}F(x):=l_\psi{\sum_{i=1}^{m}|x_i|}-\left(\sum_{i=1}^{m}[l_\psi|x_i|-\psi(|x_i|)]\right)+\delta_{\sum_{i=1}^{m}\phi((b_i-a_i^T\cdot)^2)\le\sigma}(x),
\end{equation}
where $l_\psi:=\lim_{t\downarrow 0}\psi'(t)$, and we recall that $\psi$ and $\phi$ are the log-penalty and Cauchy loss function as in \eqref{eq:5.1} respectively. Writing $P_1(x):=l_\psi{\sum_{i=1}^{m}|x_i|}$, $P_2(x):=\sum_{i=1}^{m}[l_\psi|x_i|-\psi(|x_i|)]$ and $g(x):=\sum_{i=1}^{m}\phi((b_i-a_i^Tx)^2)-\sigma$ for notational simplicity, one can see that $P_1$ and $P_2$ are convex and continuous on $\R^n$, and $g$ has Lipschitz continuous gradient. Moreover, the level-boundedness of $\psi$ implies that of $F$, and we also have $\{x:g(x)\leq{0}\}\neq\emptyset$ since $A$ has full row rank and $\sigma\in (0,\sum_{i=1}^{m}\log(1+b_i^2/{\delta^2}))$. Finally, since $\bar \phi =\infty$, we deduce from Proposition \ref{pro:17} that the MFCQ holds. Thus, one can apply the SCP$_{\rm ls}$ in \cite{YuLP21} with $f = 0$ and the $P_1$, $P_2$ and $g$ defined above, and \cite[Theorem~3.2]{YuLP21} guarantees that any accumulation point of the sequence $\{x^k\}$ generated by SCP$_{\rm ls}$ is stationary in the sense of \cite[Definition~2.2]{YuLP21}.

In our experiments below, we solve \eqref{eq:5.1} by SCP$_{\rm ls}$ and a version of $\IR$ with its subproblems approximately solved by ADMM (see Section~\ref{sec4.1} for discussions of ADMM). We refer to this latter algorithm as $\IR$$_\mathrm{ADMM}$. We also consider a variant of $\IR$ where the subproblems are solved by SPGL1 with default parameters (see Section~\ref{sec4.2} for discussions of SPGL1) such that \eqref{eq:0077742}, \eqref{eq:0077842} and \eqref{eq:0077831} may not be satisfied. We refer to this algorithm as $v\IR$$_\mathrm{SPGL1}$. Our codes are written in MATLAB and all numerical experiments are conducted in MATLAB 2019b on a 64-bit PC with  an Intel Core i7-6700 CPU (3.40GHz) and 32GB of RAM.


\paragraph{Parameter settings:} For $\IR$$_\mathrm{ADMM}$ and $v\IR$$_\mathrm{SPGL1}$, we set $L=\lambda_{\max}(AA^{T})$.\footnote{In our numerical experiment, $L=\lambda_{\max}(AA^{T})$ is computed using the MATLAB commands: {\sf if m $>$ 2000
        opts.issym = 1;
        L = eigs(A*A',1,'LM',opts);
    else
        L = norm(A*A');
    end}. }
We initialize both algorithms at $x_{\rm feas}:=A^{\dag}{b}$, where $A^{\dag}{b}$ is computed via the MATLAB commands:
\begin{verbatim}
                       [Q,R] = qr(A',0); xfeas = Q*(R'\b);
\end{verbatim}
and terminate them when the stopping criterion $\frac{\|x^{k+1}-x^k\|}{\max\{\|x^k\|,1\}}\leq{{10^{-4}}}$ is satisfied.\footnote{It is indeed not known whether $\lim_{k\to\infty}\|x^{k+1}-x^k\|=0$ for the sequence $\{x^k\}$ generated by these algorithms. We use this criterion as a heuristic and it appears to work well.}
\begin{itemize}
  \item $\IR$$_\mathrm{ADMM}$: We let $\tau_k=\max\{5^{-k-1},10^{-8}\}$ and $\mu_k=\max\{1.2^{-k-1},10^{-8}\}$ for $\IR$$_\mathrm{ADMM}$. Its subproblem solver is based on ADMM in Section~\ref{sec4.1}. Using the notation in Section~\ref{sec4.1}, we set $\gamma=\frac{0.99(1+\sqrt{5})}{2}$, $\beta=\bar{L}^{-\frac12}$ and $\tilde{\lambda}=\bar{L}\beta$, where $\bar{L}:=\max\limits_{i}\{\phi'_+((b_i-a^T_ix^k)^2)\}L$. With this choice of $\bar L$, one can see from the definition of $\bar{A}$ in \eqref{Abarbbar} that $\bar{L}\geq{\lambda_{\max}(\bar{A}^T\bar{A})}$, and thus $\tilde{\lambda}I-\beta\bar{A}^T\bar{A}=\beta(\bar{L}I-\bar{A}^T\bar{A})\succeq{0}$.

  We initialize the ADMM at $\bar x^0=0$, $\bar u^0=0$, $\bar\lambda^0=0$ when $k = 0$, and at each (outer) iteration $k$, we warm-start it using the $(\bar x^l,\bar u^l,\bar \lambda^l)$ obtained from the previous (outer) iteration. As for termination, following the discussion in Section~\ref{sec4.1} (see especially \eqref{eq:225161} and \eqref{eq:225162}), for each $k\ge 0$, we terminate the ADMM when all of the following three conditions are satisfied:
      \begin{align*}
      {\|-{\beta}\bar{A}^{T}(\bar u^{l+1}-\bar u^l)-(\tilde{\lambda}{I}-\beta{\bar{A}^{T}\bar{A}})(\bar x^{l+1}-\bar x^l)\|}\leq\min\{{\bar{\epsilon}_k},\tau_k\bar \Gamma_l\},
      \end{align*}
     \[
      {\|\bar \lambda^l-\bar \lambda^{l+1}\|}\leq\gamma\beta\min\{\bar{\epsilon}_k,\tau_k (\|\bar\lambda^{l+1}\|+1)\},
      \]
     and
  \[
  \|\bar{w}\circ{P_k(\bar{x}^{l+1})}\|_1\leq{\|\bar{w}\circ{x^k}\|_1+\mu_k},
  \]
  where $\bar{\epsilon}_k=\min\{\sigma_k,\sqrt{\sigma_k}\}$ and $\bar\Gamma_l := \beta\|\bar A^T \bar u^{l+1} + (\bar L {I}-{\bar{A}^{T}\bar{A}})\bar x^{l+1}\|+1$. We then set $(\tilde x^{k+1},\tilde u^{k+1}) := (\bar x^{l+1},\bar u^{l+1})$.

 \item $v\IR$$_\mathrm{SPGL1}$: We call the \verb+spg-bpdn+ function from the source code of SPGL1 for solving the subproblems.\footnote{The codes were downloaded from https://github.com/mpf/spgl1.} We use the default parameters and termination criteria of this solver. As the point $\breve x^{k+1}$ generated by SPGL1 may violate the constraint slightly, we apply the function $P_k(\cdot)$ in \eqref{eq:225164} and set $x^{k+1} = P_k(\breve x^{k+1})$. From Remark~\ref{rem3.2}, this variant of $\IR$ (where the criteria \eqref{eq:0077742}, \eqref{eq:0077842} and \eqref{eq:0077831} may be violated) is well defined.\footnote{Nevertheless, there is no guarantee that the $\{x^k\}$ thus generated will cluster at a stationary point of \eqref{eq:5.1}. We include this version in our experiment as a demonstration of how our framework can be used when only a black-box subproblem solver is available.}
\end{itemize}

For SCP$_{\rm ls}$, we use a terminating tolerance of $10^{-5}$ instead of the $10^{-8}$ used in \cite[Section~5.3]{YuLP21}, and the other parameter settings and subproblem solver are the same as described in \cite[Section~5.3]{YuLP21}.

\paragraph{Test instances:} We consider randomly generated problems in our experiments below. We first generate an $m\times{n}$ matrix $A$ with i.i.d standard Gaussian entries. We then randomly choose a support set $S$ of size $s$ from $\{1,2,\ldots,n\}$ and generate an $s$-sparse vector $x_{\rm orig}\in \R^n$ with i.i.d. standard Gaussian entries on $S$. We further set $b=Ax_{\rm orig}+0.01\eta$, where $\eta\in{\mathbb{R}}^{m}$ has i.i.d. standard Cauchy entries.
Finally, we let $\sigma=1.2\sum_{i=1}^{m}\log(1 +(0.01\eta_i)^2/\delta^2)$, with $\delta = 0.05$.

\paragraph{Numerical results:} In our numerical experiments, we set $\epsilon = 0.1$ in \eqref{eq:5.1} and choose $(m,n,s)=(540i,2560i,80i)$ with $i=\{2,4,6,8,10\}$. For each $i$, we generate 30 random instances as described above and report in Table~\ref{Table2} the average value of $L$ ($\mathrm{L}$), the average CPU time (in seconds) for generating $L$ (Time$_\mathrm{L}$) and $A^{\dag}{b}$ (Time$_\mathrm{slater}$), and the average CPU time (in seconds) for computing $\mathrm{QR}$ decomposition of $A^T$ (Time$_\mathrm{QR}$). Then, in Table~\ref{Table3}, we present the computational results for $\IR$$_\mathrm{ADMM}$, $v\IR$$_\mathrm{SPGL1}$ and SCP$_{\rm ls}$. These computational results include the average total number of (inner) iterations (Iter$_\mathrm{s}$),\footnote{For fair comparison, we report the total number of inner iterations for $\IR$$_\mathrm{ADMM}$ and $v\IR$$_\mathrm{SPGL1}$, i.e., the total number of iterations used by the subproblem solvers to solve the subproblems.} the average CPU time in seconds (CPU$_\mathrm{s}$) and the average recovery error (RecErr$_\mathrm{s}$) among the successful instances; here, we declare the random instance to be successfully solved if ${\rm recovery~error} := \frac{\|\zeta^k-x_{\rm orig}\|}{\max\{\|x_{\rm orig}\|,1\}}\leq{0.01}$, where $k$ is the terminating iteration and
\[
\zeta^k := \begin{cases}
  \tilde x^k & \mbox{ for $\IR$$_\mathrm{ADMM}$ and $v\IR$$_\mathrm{SPGL1}$},\\
  x^k & \mbox{ for SCP$_{\rm ls}$.}
\end{cases}
\]
We also report the corresponding data for failed cases, which are denoted by Iter$_\mathrm{f}$, CPU$_\mathrm{f}$ and RecErr$_\mathrm{f}$, respectively. Furthermore, we list the success rate (Success), and the maximum and minimum values of the residual (Res$_\mathrm{max}$ and Res$_\mathrm{min}$) at termination, where
\[
{\rm residual} := \frac{1}{\sigma}\left(\sum_{i=1}^m \log(1 + (b_i - a_i^T \zeta^k)^2/\delta^2) - \sigma\right).
\]
One can see from the computational results in Table~\ref{Table3} that $\IR$$_\mathrm{ADMM}$ and $v\IR$$_\mathrm{SPGL1}$ can return solutions of \eqref{eq:5.1} with better recovery errors, and are always faster than SCP$_{\rm ls}$.

\begin{table}[h]
\begin{center}
\caption{The value of $L$, the CPU time needed for generating $L$ and $A^{\dag}{b}$ and for computing $\mathrm{QR}$ decomposition of $A^T$}\label{Table2}
~~~\\
\begin{tabular}{ccccc}
\hline
\multicolumn{1}{c}{$i$} & \multicolumn{1}{c}{\small{$\mathrm{L}$}} & \multicolumn{1}{c}{\small{Time$_\mathrm{L}$}}
& \multicolumn{1}{c}{\small{Time$_\mathrm{QR}$}} & \multicolumn{1}{c}{\small{Time$_\mathrm{slater}$}}\\
\bottomrule
2  & 1.08e+04  &    0.2  &    0.3  &    0.0   \\
4  & 2.18e+04  &    0.6  &    1.6  &    0.0   \\
6  & 3.27e+04  &    1.7  &    5.0  &    0.0   \\
8  & 4.36e+04  &    3.7  &   12.0  &    0.1   \\
10 & 5.44e+04  &    6.6  &   20.8  &    0.1   \\
\hline
\end{tabular}
\end{center}
\end{table}

\begin{table}[h]
\begin{center}
\caption{Comparison of the performance of $\IR$$_\mathrm{ADMM}$, $v\IR$$_\mathrm{SPGL1}$ and SCP$_{ls}$}\label{Table3}
~~~\\
\begin{tabular}{cccccccccc}
\hline
\multicolumn{1}{c}{$i$} & \multicolumn{9}{l}{\small{$\IR$$_\mathrm{ADMM}$}} \\ \cline{2-10}
~ & \small{Success(\%)} & \small{Iter$_\mathrm{s}$} & \small{Iter$_\mathrm{f}$} & \small{CPU$_\mathrm{s}$} & \small{CPU$_\mathrm{f}$} & \small{RecErr$_\mathrm{s}$} &  \small{RecErr$_\mathrm{f}$} & \small{Res$_\mathrm{min}$} & \small{Res$_\mathrm{max}$}\\
\bottomrule
2  & 100  &          4408  &           -  &   14.4  &    -  & 2.0e-03  & -  & -3.7e-04  & -4.3e-05  \\
4  & 100  &          5254  &           -  &   77.6  &    -  & 1.4e-03  & -  & -6.6e-04  & -6.4e-05  \\
6  & 100  &          5646  &           -  &  188.4  &    -  & 1.1e-03  & -  & -8.6e-04  & -1.0e-04  \\
8  & 100  &          5927  &           -  &  351.3  &    -  & 9.9e-04  & -  & -1.2e-03  & -1.3e-04  \\
10 & 100  &          6631  &           -  &  612.0  &    -  & 9.0e-04  & -  & -1.4e-03  & -1.2e-04  \\
\hline
\multicolumn{1}{c}{$i$} & \multicolumn{9}{l}{\small{$v\IR$$_\mathrm{SPGL1}$}} \\ \cline{2-10}
~ & \small{Success(\%)} & \small{Iter$_\mathrm{s}$} & \small{Iter$_\mathrm{f}$} & \small{CPU$_\mathrm{s}$} & \small{CPU$_\mathrm{f}$} & \small{RecErr$_\mathrm{s}$} &  \small{RecErr$_\mathrm{f}$} & \small{Res$_\mathrm{min}$} & \small{Res$_\mathrm{max}$}\\
\bottomrule
2   & 100  &          1566  &           -  &   10.4  &    -  & 2.0e-03  & -  & -2.7e-04  & 3.5e-05   \\
4   & 100  &          1873  &           -  &   48.2  &    -  & 1.4e-03  & -  & -5.2e-04  & -2.5e-06   \\
6   & 100  &          1880  &           -  &  103.4  &    -  & 1.1e-03  & -  & -7.7e-04  & -6.2e-06   \\
8   & 100  &          1910  &           -  &  186.8  &    -  & 9.9e-04  & -  & -1.2e-03  & -8.9e-05   \\
10  & 100  &          1929  &           -  &  285.7  &    -  & 9.0e-04  & -  & -1.3e-03  & -3.8e-05   \\
\hline
\multicolumn{1}{c}{$i$} & \multicolumn{9}{l}{\small{SCP$_{\rm ls}$}} \\ \cline{2-10}
~ & \small{Success(\%)} & \small{Iter$_\mathrm{s}$} & \small{Iter$_\mathrm{f}$} & \small{CPU$_\mathrm{s}$} & \small{CPU$_\mathrm{f}$} & \small{RecErr$_\mathrm{s}$} &  \small{RecErr$_\mathrm{f}$} & \small{Res$_\mathrm{min}$} & \small{Res$_\mathrm{max}$}\\
 \bottomrule
 2  & 100  &         12757  &           -  &  111.6  &    -  & 2.0e-03  & -  & -1.5e-05  & -9.0e-08  \\
 4  &  50  &         22347  &          8299  &  666.1  &  238.1  & 1.4e-03  & 8.1e-01  & -3.2e-05  & -2.4e-07  \\
 6  &   0  &           -  &          6533  &    -  &  380.6  & -  & 8.4e-01  & -5.3e-05  & -2.1e-05  \\
 8  &   0  &           -  &          5070  &    -  &  507.1  & -  & 8.5e-01  & -6.1e-05  & -2.6e-05  \\
 10 &   0  &           -  &          4529  &    -  &  693.3  & -  & 8.6e-01  & -7.1e-05  & -3.9e-05  \\
\hline
\end{tabular}
\end{center}
\end{table}

\end{document}